\newtheorem{thm}{{{Theorem}}}[section]
\newtheorem{prop}[thm]{{Proposition}}
\newtheorem{lem}[thm]{{Lemma}}
\newtheorem{cor}[thm]{{Corollary}}
\newtheorem{remark}[thm]{Remark}
\numberwithin{equation}{section}
\def\Z{\mathbb{Z}}
\def\Q{\mathbb{Q}}
\def\C{\mathbb{C}}
\def\A{\mathbb{A}}
\def\GL{{\mathop{\mathrm{GL}}}}
\def\min{{\mathrm{min}}}
\def\O{{\mathcal O}}
\def\fp{{\mathfrak{p}}}
\def\ds{\displaystyle}
\def\lra{{\longrightarrow}}
\def\w{{\varpi}}
\def\F{{\mathbb{F}}}
\def\la{\lambda}
\numberwithin{equation}{section}
\title[A remark on conductor, depth,  and principal congruence subgroups]
{A remark on conductor, depth and principal congruence subgroups}
\author{Michitaka Miyauchi and Takuya Yamauchi}
\date{\today}
\keywords{depth, conductor, principal congruence subgroups}
\thanks{The first author is partially supported by JSPS KAKENHI Grant Number (B) No.19H01778. The second author is partially supported by 
JSPS KAKENHI Grant Number (B) No.19H01778.}
\subjclass[2010]{}
\address{Michitaka Miyauchi \\
Graduate School of Education, Okayama Univ.\\
3-1-1, Tsushima-Naka, Kita-ku, Okayama 700-8530
, JAPAN}
\email{miyauchi@okayama-u.ac.jp}
\address{Takuya Yamauchi \\
Mathematical Inst. Tohoku Univ.\\
 6-3,Aoba, Aramaki, Aoba-Ku, Sendai 980-8578, JAPAN}
\email{takuya.yamauchi.c3@tohoku.ac.jp}
\begin{document}
\maketitle

\begin{abstract}
In this paper we study a relation between conductor, depth, and the level of principal congruence subgroups for 
irreducible admissible representations of $\GL_n(F)$ for a non-archimedean local field 
$F$ of characteristic zero. 
As a global application, we estimate conductors of unitary irreducible automorphic cuspidal 
representations of $\GL_n(\A_\Q)$ in terms of principal congruence subgroups. 
We also give an explicit formula for the dimension of fixed vectors with respect to 
principal congruence subgroups for irreducible admissible representations of $\GL_2(F)$ 
as a local application.   
\end{abstract}

\tableofcontents

\section{Introduction}
Let $F$ be a non-archimedean local field of characteristic zero and $\O=\O_F$ the ring of integers of $F$. 
Let $\fp=\w\O$ be the maximal ideal of $\O$ where $\w$ is a fixed uniformizer of $F$. Put $\F=\O/\fp$. 
For an irreducible generic admissible representation $\pi$ of $\GL_n(F)$, there are several important invariants characterizing 
$\pi$. In this paper, we focus on the conductor $c(\pi)$ of $\pi$ defined in \cite{JPSS} 
(there is a gap in the proof there but later it is modified in \cite{Jacquet},\cite{Mat}) and 
the depth $\rho(\pi)$ defined in \cite{MP}. 
Both of invariants are defined by using families of suitable 
open compact subgroups of $\GL_n(F)$. 
Therefore, it is quite natural to ask any substantial proportion of the principal congruence subgroups among such families. 

In this paper, we study a condition when $\pi$ has a non-trivial fixed vector by a principal 
congruence subgroup in terms of the conductor $c(\pi)$ through study of the depth 
$\rho(\pi)$.  
Let us fix some notation to explain our results. 
Put $K=\GL_n(\O)$. 
For each positive integer $m$, let $K_1(m)$ be the subgroup of $K$ consisting of any element $g$ whose 
reduction modulo $\fp$ satisfies that the $n$-th row of $g$ mod $\fp^m$ is $(0,\ldots,0,1)$. 
We also define the principal congruence subgroup $K(m)$ of level $m$ to be the kernel of 
the mod $\fp^m$-reduction $K\lra \GL_n(\O/\fp^m)$.  
The conductor $c(\pi)$ of $\pi$ is defined as the minimal non-negative integer $m$ of which $\pi$ has a non-trivial $K_1(m)$-fixed vector. 
Then we will prove the following:
\begin{thm}\label{main1}
Let $\pi$ be an irreducible essentially square integrable representation of $\GL_n(F)$
and $m$ a non-negative integer.
The following two conditions are equivalent:
\begin{enumerate}
\item $\pi$ has a non-trivial $K(m)$-fixed vector; 
\item $c(\pi)\le mn$. 
\end{enumerate}
\end{thm}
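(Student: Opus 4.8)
The plan is to prove both implications by passing through the Moy--Prasad depth $\rho(\pi)$, isolating a statement valid for an arbitrary irreducible admissible representation from the input that is special to the discrete series. If $m=0$ then $K(0)=\GL_n(\O)$, so condition (1) says $\pi$ is unramified, which is equivalent to $c(\pi)=0=0\cdot n$; hence from now on assume $m\ge1$. For such $m$ the first step is the purely local equivalence
\[
\pi^{K(m)}\neq0\ \Longleftrightarrow\ \rho(\pi)\le m-1
\tag{$\ast$}
\]
for \emph{every} irreducible admissible representation $\pi$ of $\GL_n(F)$. Granting $(\ast)$, the theorem reduces to the assertion that, for $\pi$ essentially square integrable, $\rho(\pi)\le m-1$ holds exactly when $c(\pi)\le mn$, which I would read off from the depth--conductor dictionary for the discrete series.

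For $(\ast)$ I would use the description of the Bruhat--Tits building of $\GL_n(F)$ by hereditary $\O$-orders. The group $K(m)=1+\w^m\Mat_n(\O)$ is exactly the Moy--Prasad subgroup $G_{x_0,(m-1)+}$ at the hyperspecial vertex $x_0=[\O^n]$, so since $\rho(\pi)$ is an infimum over the building, $\pi^{K(m)}\neq0$ forces $\rho(\pi)\le m-1$. Conversely, if $\rho(\pi)\le m-1$ there is a hereditary order $\mathfrak{A}\subset\Mat_n(F)$, of period $e=e(\mathfrak{A})$ and Jacobson radical $\mathfrak{P}=\mathfrak{P}_{\mathfrak{A}}$, with $\pi^{1+\mathfrak{P}^{(m-1)e+1}}\neq0$; since $\mathfrak{P}^{e}=\w\mathfrak{A}$ this subgroup equals $1+\w^{m-1}\mathfrak{P}$, and after replacing $\mathfrak{A}$ by a $\GL_n(F)$-conjugate in standard form --- harmless, as $\pi^{gHg^{-1}}\neq0\iff\pi^{H}\neq0$ --- one has $\w\Mat_n(\O)\subseteq\mathfrak{P}$, because modulo $\fp$ the radical of a standard hereditary order is a strictly block-upper-triangular ideal and therefore contains the zero residue. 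Hence $K(m)=1+\w^m\Mat_n(\O)\subseteq1+\w^{m-1}\mathfrak{P}$ and $\pi^{K(m)}\neq0$. This comparison of a Moy--Prasad group at a possibly non-hyperspecial facet with the principal congruence filtration at $x_0$ is the step I expect to be the main obstacle.

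For the discrete series input, the Bernstein--Zelevinsky classification gives $\pi\cong\mathrm{St}_r(\sigma)\otimes|\det|^{s}$ with $\sigma$ supercuspidal on $\GL_d(F)$, $n=rd$, $r\ge1$, $s\in\C$, and $\mathrm{St}_r(\sigma)$ the generalized Steinberg representation; twisting by the unramified character $|\det|^{s}$ affects none of $c(\pi)$, $\rho(\pi)$, $\pi^{K(m)}$, so I may take $\pi=\mathrm{St}_r(\sigma)$. I would then use that the local Langlands correspondence preserves depth and carries $\mathrm{St}_r(\sigma)$ to $\phi_\sigma\boxtimes\mathrm{Sp}(r)$, with $\mathrm{Sp}(r)$ the $r$-dimensional special representation of the Weil--Deligne group; this has the same ramification break as the irreducible representation $\phi_\sigma$. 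When $\sigma$ is ramified, $c(\mathrm{St}_r(\sigma))=r\,c(\sigma)$ (from the $\varepsilon$-factor of $\phi_\sigma\boxtimes\mathrm{Sp}(r)$), and $\phi_\sigma$, being irreducible hence isoclinic, has a single break equal to $\mathrm{Sw}(\phi_\sigma)/d=(c(\sigma)-d)/d$; thus
\[
\rho(\pi)=\frac{c(\sigma)}{d}-1,\qquad c(\pi)=r\,c(\sigma),\qquad n=rd,
\]
whence $\rho(\pi)\le m-1\iff c(\sigma)\le md\iff c(\pi)\le mn$, and $(\ast)$ concludes. When $\sigma$ is unramified, necessarily $d=1$, so $n=r$, $\rho(\pi)=0$, $c(\pi)=n-1$, and both $\rho(\pi)\le m-1$ and $c(\pi)\le mn$ hold for every integer $m\ge1$; $(\ast)$ again gives the equivalence. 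Apart from $(\ast)$, the point that still needs care is assembling this dictionary --- especially the depth-preservation under $\mathrm{St}_r(-)$ and the exact, not merely asymptotic, relation among $c(\sigma)$, $d$ and $\rho(\sigma)$ for ramified supercuspidals.
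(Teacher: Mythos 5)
Your argument is correct, and its skeleton is the same as the paper's: both proofs insert the Moy--Prasad depth as the intermediary, prove $\pi^{K(m)}\neq 0\Longleftrightarrow \rho(\pi)\le m-1$ via the hereditary-order description of unrefined minimal $K$-types (your normalization $M_n(\O)\supset\mathfrak{A}\supset\mathfrak{P}\supset\w M_n(\O)$ and the containments $1+\mathfrak{P}^{(m-1)e+1}=1+\w^{m-1}\mathfrak{P}\supset K(m)$ are verbatim the paper's Proposition 2.1, and like you the paper never uses square-integrability in that step), and then convert $\rho(\pi)\le m-1$ into $c(\pi)\le mn$; the $m=0$ case is dispatched identically. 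The one genuine divergence is in the second step: the paper simply quotes Lansky--Raghuram (Theorem 3.1 of \cite{LR}), which gives $\rho(\pi)=\max\{(c(\pi)-n)/n,0\}$ for essentially square integrable $\pi$, whereas you re-derive an equivalent statement from the Bernstein--Zelevinsky realization $\pi\cong\mathrm{St}_r(\sigma)$, the conductor formula $c(\phi_\sigma\boxtimes\mathrm{Sp}(r))=r\,c(\sigma)$ for ramified $\sigma$, and the identification of $\rho(\sigma)$ with the unique break $(c(\sigma)-d)/d$ of $\phi_\sigma$ via depth preservation of the local Langlands correspondence. Your computation does reproduce $\rho(\pi)=(c(\pi)-n)/n$ in the ramified case and $\rho(\pi)=0$, $c(\pi)=n-1$ in the unramified case, so the two routes agree; but note that what you assemble by hand is exactly the content of the cited Lansky--Raghuram formula, and your version trades one black box for another at least as heavy (depth preservation under LLC for $\GL_n$, plus the invariance of depth under $\mathrm{St}_r(-)$), so the paper's citation is the more economical path, while yours makes the Galois-side mechanism behind the formula visible.
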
 
As is known, any irreducible generic representation $\pi$ can be written by 
\begin{equation}\label{langlands}
\pi={\rm Ind}^{{\rm GL}_n(F)}_{P_{\underline{n}}(F)}(\tau_1\otimes \cdots \otimes \tau_k)
\end{equation}
where $P_{\underline{n}}$ is the standard parabolic subgroup associated to the 
partition $\underline{n}=(n_1,\ldots,n_k)$ of $n$ such that its Levi subgroup is 
isomorphic to ${\rm GL}_{n_1}\times \cdots \times {\rm GL}_{n_k}$ (cf. Theorem 9.3 of \cite{PR}).  Further each 
$\tau_i\ (1\le i\le k)$ is an irreducible essentially square integrable representation 
of $\GL_{n_i}(F)$. 
Then we will show the following:
\begin{thm}\label{main2}
Let $\pi$ be as in $($\ref{langlands}$)$ and $m$ a non-negative integer. 
The following two conditions are equivalent:
\begin{enumerate}
\item $\pi$ has a non-trivial $K(m)$-fixed vector; 
\item $c(\tau_i)\le mn_i$ for each $1\le i\le k$. 
\end{enumerate}
\end{thm}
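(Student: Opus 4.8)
The plan is to compute the space $\pi^{K(m)}$ of $K(m)$-fixed vectors directly in the induced model \eqref{langlands}, express it through the principal congruence fixed spaces of the inducing data $\tau_1,\dots,\tau_k$, and then invoke Theorem \ref{main1} for each $\tau_i$ separately. I would write $P=P_{\underline n}=MN$ with Levi $M\cong\GL_{n_1}\times\cdots\times\GL_{n_k}$ and unipotent radical $N$, set $\sigma=\tau_1\otimes\cdots\otimes\tau_k$ regarded as a representation of $P(F)$ trivial on $N(F)$, and realize $\pi$ as the space of smooth $f\colon\GL_n(F)\to V_\sigma$ with $f(pg)=\delta_P(p)^{1/2}\sigma(p)f(g)$, acted on by right translation; the $\delta_P^{1/2}$ normalization is immaterial below because $\delta_P$ is trivial on compact subgroups.

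First I would invoke the Iwasawa decomposition $\GL_n(F)=P(F)K$: an $f\in\pi$ is determined by $f|_K$, and $f\in\pi^{K(m)}$ exactly when $f|_K$ is right $K(m)$-invariant. Thus $f$ is freely determined by its values at representatives $y$ of the finite double coset space $(P(F)\cap K)\backslash K/K(m)$, subject to $f(y)\in V_\sigma^{\,P(F)\cap yK(m)y^{-1}}$. The point I would isolate next is that $K(m)$ is normal in $K$, so $yK(m)y^{-1}=K(m)$ for every $y\in K$; hence every value is constrained only by $f(y)\in V_\sigma^{\,P(F)\cap K(m)}$, giving
\[
\dim\pi^{K(m)}=\bigl|(P(F)\cap K)\backslash K/K(m)\bigr|\cdot\dim V_\sigma^{\,P(F)\cap K(m)},
\]
so that (the identity coset always occurring) $\pi^{K(m)}\neq0$ if and only if $V_\sigma^{\,P(F)\cap K(m)}\neq0$.

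It then remains to identify $V_\sigma^{\,P(F)\cap K(m)}$. Writing a block-upper-triangular element of $K(m)$ as the product of its block-diagonal part and a block-unipotent part, one sees $P(F)\cap K(m)=(N(F)\cap K(m))(M(F)\cap K(m))$ with $M(F)\cap K(m)=\prod_{i=1}^k K^{(i)}(m)$, where $K^{(i)}(m)$ is the principal congruence subgroup of level $m$ in $\GL_{n_i}(\O)$; since $\sigma$ is trivial on $N(F)$ this yields $V_\sigma^{\,P(F)\cap K(m)}=\bigotimes_{i=1}^k V_{\tau_i}^{K^{(i)}(m)}$. This tensor product is nonzero precisely when each factor is, i.e.\ when each $\tau_i$ has a nontrivial $K^{(i)}(m)$-fixed vector; applying Theorem \ref{main1} to the essentially square integrable representation $\tau_i$ of $\GL_{n_i}(F)$ translates this into $c(\tau_i)\le m n_i$. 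Chaining the two reductions gives the equivalence of (1) and (2).

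Once Theorem \ref{main1} is available the argument is essentially bookkeeping, so the main obstacle I anticipate is purely technical: setting up the Mackey-type double coset computation cleanly, confirming that the modulus character drops out on $P(F)\cap K$, and verifying the unipotent/Levi factorization of $P(F)\cap K(m)$. It is worth noting that irreducibility of $\pi$ is never used; the computation of $\pi^{K(m)}$ and its nonvanishing criterion hold for the induced representation of \eqref{langlands} as written, and depend only on the data $\tau_1,\dots,\tau_k$.
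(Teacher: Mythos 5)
Your proposal is correct and follows essentially the same route as the paper: the paper's Lemma 3.1 performs exactly this Mackey-type computation (citing Bernstein--Zelevinsky Lemma 2.24 for the double-coset decomposition of $\pi^{K(m)}$, then using the Iwasawa decomposition and the normality of $K(m)$ in $K$ to replace $M\cap gK(m)g^{-1}$ by $M\cap K(m)=\prod_i K(m)_{n_i}$), and then applies Theorem \ref{main1} to each $\tau_i$. Your hand-derivation of the double-coset formula and your observation that irreducibility is not needed both match the paper's treatment.
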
 
It follows immediately from Theorem \ref{main1} and Theorem \ref{main2} that 
\begin{cor}\label{cor-of-main-thms}
Let $\pi$ be an irreducible generic representation of $\GL_n(F)$
and $m$ a non-negative integer. The following two conditions are equivalent:
\begin{enumerate}
\item $\pi$ has a non-trivial $K(m)$-fixed vector but $\pi^{K(m-1)}=0$; 
\item $m< c(\pi)\le mn$. 
\end{enumerate}
where $\pi^{K(m-1)}=0$ means no condition when m=0. 

Further, if $\pi$ is essentially square integrable, then the condition $m< c(\pi)\le mn$ is 
replaced with $(m-1)n< c(\pi)\le mn$. 
\end{cor}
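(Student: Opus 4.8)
The plan is to obtain Corollary \ref{cor-of-main-thms} by invoking Theorems \ref{main1} and \ref{main2} at the two consecutive levels $m$ and $m-1$, supplemented by the additivity of the conductor along the Langlands data, $c(\pi)=\sum_{i=1}^{k}c(\tau_i)$ for $\pi$ as in (\ref{langlands}); the latter is a standard input coming from the multiplicativity of the local $\varepsilon$-factor under the isobaric construction (cf.\ \cite{JPSS}).

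I would begin with the ``essentially square integrable'' assertion, which is the transparent case $k=1$, $\tau_1=\pi$, $n_1=n$: Theorem \ref{main1} at level $m$ reads $\pi^{K(m)}\neq0\iff c(\pi)\le mn$ and at level $m-1$ (for $m\ge1$) reads $\pi^{K(m-1)}\neq0\iff c(\pi)\le(m-1)n$, so negating the second shows that condition (1) is exactly $(m-1)n<c(\pi)\le mn$; the case $m=0$ reduces to $\pi^{K}\neq0$, i.e.\ $c(\pi)=0$, matching the convention. For a general irreducible generic $\pi$ I would run the same two-level argument with Theorem \ref{main2}: combining Theorem \ref{main2} at level $m$ with the negation of Theorem \ref{main2} at level $m-1$ shows that condition (1) is equivalent to
\[
c(\tau_i)\le mn_i\ \ (1\le i\le k)\qquad\text{and}\qquad c(\tau_{j})>(m-1)n_{j}\ \text{ for some }j.
\]
Summing the first family of inequalities and applying additivity yields the upper bound $c(\pi)=\sum_ic(\tau_i)\le mn$ immediately.

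The step I expect to be the main obstacle is the rest of the equivalence in the general case, i.e.\ producing the sharp lower bound for $c(\pi)$ and, conversely, recovering the per-factor inequalities from a constraint on $c(\pi)$ alone. The difficulty is intrinsic: the smallest $m$ with $\pi^{K(m)}\neq0$ is, by Theorem \ref{main2}, equal to $\max_{1\le i\le k}\lceil c(\tau_i)/n_i\rceil$, and this is not determined by the total $c(\pi)=\sum_ic(\tau_i)$ (a factor with large $c(\tau_i)$ but small $n_i$ can be offset by unramified factors). Accordingly I would reformulate condition (1) as the single identity $m=\max_i\lceil c(\tau_i)/n_i\rceil$ and then analyse, under the standing hypotheses --- $\pi$ irreducible generic, so the $\tau_i$ are pairwise unlinked, together with the list of conductors that actually occur for essentially square integrable representations of $\GL_{n_i}(F)$ --- how this maximum is related to the position of $c(\pi)$ in the interval of condition (2). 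This reconciliation of the ``per-factor ceiling'' description of the level with the ``total conductor'' description is where the genuine content beyond Theorems \ref{main1}--\ref{main2} lies; in particular I would test it against small cases such as $\pi=\tau_1\times\chi$ with $\tau_1$ a ramified essentially square integrable representation of $\GL_2(F)$ and $\chi$ an unramified character, to pin down exactly how tight the bounds in condition (2) should be.
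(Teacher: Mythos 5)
The paper supplies no argument here at all: the corollary is asserted to ``follow immediately'' from Theorems~\ref{main1} and \ref{main2}, so the only thing to compare your proposal against is that implied two-level argument, which is precisely what you set up. Your proof of the ``Further'' assertion (the essentially square integrable case) is complete and correct, and your derivation of the upper bound $c(\pi)\le mn$ from Theorem~\ref{main2} at level $m$ together with the additivity $c(\pi)=\sum_i c(\tau_i)$ is the intended route.

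The ``main obstacle'' you flag, however, is not a step awaiting a cleverer argument: pushed to its conclusion, your own analysis shows that the general equivalence is false in both directions, and you should have carried out the test case you propose rather than leaving it open. Take $n=2$ and $\pi=\pi(\chi_1,\chi_2)$ irreducible with $c(\chi_1)=1$ and $\chi_2$ unramified. By Theorem~\ref{main2}, $\pi^{K(1)}\neq 0$ while $\pi^{K(0)}=0$, so condition (1) holds with $m=1$; but $c(\pi)=c(\chi_1)+c(\chi_2)=1$, so $m<c(\pi)$ fails. Indeed, from $\pi^{K(m-1)}=0$ one only gets some $j$ with $c(\tau_j)\ge (m-1)n_j+1\ge m$, hence only $m\le c(\pi)\le mn$ is provable, and the first example shows equality occurs. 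Conversely, with $c(\chi_1)=2$ and $\chi_2$ unramified one has $c(\pi)=2$, so condition (2) holds with $m=1$, yet $\pi^{K(1)}=0$ because $c(\chi_1)=2>1\cdot n_1$; so (2) does not imply (1) either. (The case $m=0$ of (2) is even vacuous as written, although (1) can hold with $m=0$.) The correct general statement is exactly the one you extracted: condition (1) is equivalent to $m=\max_{1\le i\le k}\lceil c(\tau_i)/n_i\rceil$, which is not determined by $c(\pi)$ alone; only the necessary condition $m\le c(\pi)\le mn$ and the essentially square integrable refinement $(m-1)n<c(\pi)\le mn$ survive. Note that the paper's global application (Theorem~\ref{estcond}) in fact only invokes the weaker, correct per-factor bounds, so it is unaffected.
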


A motivation in our study comes up to authors in an estimation of upper and lower bounds 
of the conductor of automorphic families in the setting of low lying zeros 
\cite{ST}, \cite{KWY} (see also Remark \ref{mot}). 

This paper is organized as follows. 
In Section 2, we give a proof of Theorem \ref{main1} which is a balk of this paper and 
the general case follows through the discussion in Section 3. 
Finally, we give some applications in both of global and local situation. 
The global one is about an estimation of the global conductor of 
a cuspidal representation of $\GL_n(\A_\Q)$. 
Finally, the dimension of $K(m)$-fixed vectors in 
the case when $n=2$ will be discussed. 

\textbf{Acknowledgments.} We would like to thank professors 
H-H. Kim and S. Wakatsuki for inspiring the authors to address this problem. 
We thank the referee for helping us to correct and improve our paper greatly. 

\section{Essentially square integrable representations}

Let $\pi$ be an irreducible admissible representation of $\mathrm{GL}_n(F)$.
The depth $\rho(\pi)$ of $\pi$ is defined in \cite{MP}.
Due to \cite{BL},
we may use the lattice-theoretic realization of unrefined minimal 
$K$-types in \cite{Bushnell}.
Let $\mathfrak{A}$ be a hereditary order in $M_n(F)$.
We denote by $\mathfrak{P}$ the Jacobson radical of $\mathfrak{A}$.
There exists a positive integer $e(\mathfrak{A})$
which satisfies $\mathfrak{P}^{e(\mathfrak{A})} = \varpi \mathfrak{A}$. 
By \cite{MP} Theorem 5.2,
the depth $\rho(\pi)$ of $\pi$ is equal to the least $i/e(\mathfrak{A})$
such  that 
$\pi$ has a non-zero $1+\mathfrak{P}^{i+1}$-fixed vector
for a non-negative integer $i$.

\begin{prop}\label{square_int2}
Let $\pi$ be an irreducible essentially square integrable representation 
of $\GL_n(F)$.
For any positive integer $m$,
the following two conditions are equivalent:
\begin{enumerate}
\item[(i)] $\rho(\pi) \leq m-1$;
\item[(ii)] $\pi$ has a non-zero $K(m)$-fixed vector. 
\end{enumerate}
\end{prop}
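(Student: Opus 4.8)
The plan is to translate both conditions into statements about fixed vectors under the groups $1 + \mathfrak{P}^{i}$ attached to a suitably chosen hereditary order, and then compare the two filtrations. For the principal congruence subgroup, the natural order is the maximal order $\mathfrak{A} = M_n(\O)$ itself, for which $\mathfrak{P} = \w M_n(\O)$, $e(\mathfrak{A}) = 1$, and $K(m) = 1 + \mathfrak{P}^m$. With this choice, \cite{MP} Theorem 5.2 as quoted above says that $\rho(\pi)$ is the least integer $i \geq 0$ such that $\pi$ has a non-zero $(1+\mathfrak{P}^{i+1})$-fixed vector, i.e. the least $i$ with $\pi^{K(i+1)} \neq 0$. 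Since $K(m') \subseteq K(m)$ for $m' \leq m$ (here $K(0) = K$), the set of $m$ with $\pi^{K(m)} \neq 0$ is an up-set, so condition (ii) for a given $m$ is equivalent to $m-1 \geq \rho(\pi)$ provided $\rho(\pi)$ is an integer.

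The one genuine point to verify is therefore that $\rho(\pi)$ is an \emph{integer} when $\pi$ is essentially square integrable, and moreover that it is computed by the maximal order. For a general irreducible $\pi$ the depth need not be an integer and is computed via some (possibly non-maximal) hereditary order, so the square-integrable hypothesis must be used. I would invoke the classification of essentially square integrable representations of $\GL_n(F)$: by Bernstein--Zelevinsky such a $\pi$ is $Q(\Delta)$ for a segment $\Delta = [\sigma, \dots, \nu^{r-1}\sigma]$ with $\sigma$ supercuspidal on $\GL_d(F)$, $n = dr$. The depth of $\pi$ equals the depth of $\sigma$ (a twist by an unramified character does not change depth), and for a supercuspidal $\sigma$ one has a compact-induction description whose unrefined minimal $K$-type pins down $\rho(\sigma)$. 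The key structural input is that one can realize the relevant stratum inside $M_d(\O)$, equivalently that $\rho(\sigma) = j/e$ with $e = e(\mathfrak{A})$ dividing... — actually what I want is cleaner: I claim that for essentially square integrable $\pi$ the depth is attained already on the maximal order $M_n(\O)$, because the building-theoretic "optimal" point for a square-integrable representation can be taken to be a vertex (the representation is, after unramified twist, compactly induced from a maximal open compact-mod-center subgroup attached to a maximal order). Granting that, $\rho(\pi) = i_0$ for the least $i_0$ with $\pi^{1 + \w^{i_0+1}M_n(\O)} \neq 0$, an integer, and the equivalence (i) $\Leftrightarrow$ (ii) is immediate from the displayed form of \cite{MP} Theorem 5.2 together with the monotonicity of the $K(m)$.

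Concretely, the steps I would carry out are: (1) record that $K(m) = 1 + \mathfrak{P}^m$ for the maximal order with $e(\mathfrak{A}) = 1$, and note the nesting $K(m) \supseteq K(m+1)$; (2) observe that $\{m \geq 0 : \pi^{K(m)} \neq 0\}$ is an up-set, so it suffices to identify its minimal element; (3) prove $\rho(\pi) \in \Z$ and that this minimal element is $\rho(\pi)+1$, using the classification and the fact that depth is insensitive to unramified twist, reducing to the supercuspidal case and using that a supercuspidal is compactly induced from a maximal order data so that its minimal $K$-type lives on $M_n(\O)$; (4) conclude. The main obstacle is step (3): showing that for essentially square integrable $\pi$ the depth is realized on the maximal order, hence integral. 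I expect this to follow from Bushnell--Kutzko-type structure theory (or from Moy--Prasad directly, since for these representations the associated unrefined minimal $K$-type can be chosen at a vertex of the building), but it is the place where the square-integrability hypothesis is essential and where some care with the normalization $\rho(\pi) = i/e(\mathfrak{A})$ versus the integer $i$ is needed.
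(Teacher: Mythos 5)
There is a genuine gap, and it sits exactly where you flagged it: your step (3) is false. For an irreducible essentially square integrable representation of $\GL_n(F)$ the depth is \emph{not} in general an integer, and the minimal unrefined $K$-type is \emph{not} in general realized on the maximal order. Indeed, the paper's own Proposition on conductors quotes the Lansky--Raghuram formula $\rho(\pi)=\max\{(c(\pi)-n)/n,\,0\}$, which is non-integral whenever $c(\pi)>n$ and $n\nmid c(\pi)$; concretely, a ramified supercuspidal of $\GL_2(F)$ with $c(\pi)=3$ has depth $1/2$ and is compactly induced from the normalizer of an Iwahori-type order with $e(\mathfrak{A})=2$, not from a maximal order. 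So the structural claim you hoped to extract from Bushnell--Kutzko theory is simply not available, and your identification of the minimal element of the up-set as $\rho(\pi)+1$ breaks down (the correct minimal element is $\lceil\rho(\pi)\rceil+1$).

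The good news is that no integrality, no classification of square integrable representations, and in fact no square-integrability hypothesis at all is needed; the statement is a direct consequence of the Moy--Prasad characterization of depth via hereditary orders. Your direction (ii) $\Rightarrow$ (i) is already correct: with $\mathfrak{A}=M_n(\mathcal{O})$, $e(\mathfrak{A})=1$ and $K(m)=1+\mathfrak{P}^{(m-1)+1}$, so $\pi^{K(m)}\neq 0$ forces $\rho(\pi)\le m-1$. For (i) $\Rightarrow$ (ii), instead of trying to move the depth onto the maximal order, take the hereditary order $\mathfrak{A}$ and integer $i$ that actually realize $\rho(\pi)=i/e(\mathfrak{A})$, conjugate so that $M_n(\mathcal{O})\supset\mathfrak{A}\supset\mathfrak{P}\supset\varpi M_n(\mathcal{O})$, and observe that $i\le e(\mathfrak{A})(m-1)$ gives
\[
1+\mathfrak{P}^{i+1}\ \supset\ 1+\mathfrak{P}^{e(\mathfrak{A})(m-1)+1}\ =\ 1+\varpi^{m-1}\mathfrak{P}\ \supset\ 1+\varpi^{m}M_n(\mathcal{O})\ =\ K(m),
\]
so the non-zero $(1+\mathfrak{P}^{i+1})$-fixed vector is already $K(m)$-fixed. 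This two-line containment replaces your entire step (3); since $m-1$ is an integer, the comparison $i/e(\mathfrak{A})\le m-1$ is exactly what is needed and the possible non-integrality of $\rho(\pi)$ causes no harm.
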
 
\begin{proof}
(i) $\Longrightarrow$ (ii).
Let $\mathfrak{A}$ be a hereditary order in $M_n(F)$
so that $\pi$ has a non-zero $1+\mathfrak{P}^{i+1}$-fixed vector 
and $\rho(\pi) = i/e(\mathfrak{A})$.
Replacing by a conjugate if necessary,
we may assume that $M_n(\mathcal{O}) \supset \mathfrak{A}
\supset \mathfrak{P} \supset \varpi M_n(\mathcal{O})$.
Suppose that $\rho(\pi) = i/e(\mathfrak{A})\leq m-1$.
Then we have
\[
1+\mathfrak{P}^{i+1} \supset 1+\mathfrak{P}^{e(\mathfrak{A})(m-1)+1}
=1+ \varpi^{m-1}\mathfrak{P}
\supset 1+ \varpi^{m}M_n(\mathcal{O})
= K(m).
\]
So $\pi$ contains a non-zero $K(m)$-fixed vector.

(ii) $\Longrightarrow$ (i).
Suppose that $\pi$ has a non-zero $K(m)$-fixed vector.
Set $\mathfrak{A} = M_n(\mathcal{O})$.
Then $\mathfrak{A}$ is a hereditary order in $M_n(F)$ 
with $e(\mathfrak{A}) = 1$.
Since $K(m) = 1+\varpi^m M_n(\mathcal{O})
= 1+\mathfrak{P}^m$,
the depth $\rho(\pi)$ of $\pi$ is less than or equal to 
$(m-1)/e(\mathfrak{A}) = m-1$.
\end{proof}

\begin{prop}\label{square_int}
Let $\pi$ be an irreducible essentially square integrable representation 
of $\GL_n(F)$.
For any positive integer $m$,
the following two conditions are equivalent:
\begin{enumerate}
\item[(i)] $\rho(\pi) \leq m-1$;
\item[(ii)] $c(\pi)\le mn$. 
\end{enumerate}
\end{prop}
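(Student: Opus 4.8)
The plan is to compare the two invariants $\rho(\pi)$ and $c(\pi)$ for an irreducible essentially square integrable representation by passing through the known classification of such representations. Recall that every irreducible essentially square integrable $\pi$ of $\GL_n(F)$ is, up to an unramified twist, of the form $\mathrm{St}_r(\sigma)$, a Steinberg-type (Zelevinsky segment) representation built from an irreducible supercuspidal representation $\sigma$ of $\GL_d(F)$ with $n=rd$. One then has the well-known conductor formula $c(\pi)=r\,c(\sigma)+(r-1)d$ (this appears in \cite{JPSS}; see also the references cited in the introduction). In parallel, the depth is insensitive to this parabolic-induction construction: $\rho(\mathrm{St}_r(\sigma))=\rho(\sigma)$, because the Langlands/Zelevinsky quotient construction does not change the depth, and unramified twists do not change the depth either. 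So the proposition for general essentially square integrable $\pi$ is reduced to the supercuspidal case together with these two dictionaries.

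For the supercuspidal case I would invoke the precise relation between depth and conductor for supercuspidal representations of $\GL_d(F)$. Writing $\rho=\rho(\sigma)$ and $c=c(\sigma)$, the known statement (from Bushnell–Kutzko type theory, or from the comparison of the stable and unstable cases in \cite{Bushnell} and \cite{MP}) is that $\rho(\sigma)\le m-1$ is equivalent to $c(\sigma)\le md$: indeed $\sigma$ has a nonzero $1+\mathfrak{P}^{\,e(m-1)+1}$-fixed vector for a suitable hereditary order iff the Whittaker/Kirillov model of $\sigma$ is fixed by $K_1(md)$. More concretely, one can argue as in Proposition \ref{square_int2}: $\rho(\sigma)\le m-1$ iff $\sigma$ has a nonzero $K(m)$-fixed vector (by Proposition \ref{square_int2} applied to the supercuspidal $\sigma$ of $\GL_d$), and then I would show directly that $\sigma^{K(m)}\ne 0 \iff c(\sigma)\le md$ using the explicit description of $K(m)\cap P$ where $P$ is the mirabolic subgroup, together with the characterization of the conductor via the mirabolic restriction in the Kirillov model (this is the heart of \cite{JPSS}).

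Assembling: for $\pi=\mathrm{St}_r(\sigma)$ (times an unramified twist) we get
\[
\rho(\pi)\le m-1 \iff \rho(\sigma)\le m-1 \iff c(\sigma)\le md \iff r\,c(\sigma)\le rmd=mn.
\]
It then remains to check that $r\,c(\sigma)\le mn$ is equivalent to $c(\pi)=r\,c(\sigma)+(r-1)d\le mn$. One direction is obvious. For the other, suppose $c(\pi)\le mn$; I expect the key inequality to be that $c(\sigma)$ and $d$ have the right parity/divisibility relation so that $r\,c(\sigma)+(r-1)d\le mn=rmd$ forces $c(\sigma)\le (m-1)d + d = md$, i.e. $(r-1)(c(\sigma)+d)\le (r-1)md$ plus $c(\sigma)\le md$; more carefully, from $r\,c(\sigma)+(r-1)d\le rmd$ we get $c(\sigma)\le md-\frac{r-1}{r}d<md$, hence $c(\sigma)\le md$ since these are integers, which is exactly what we need. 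The main obstacle is therefore the supercuspidal base case — establishing $c(\sigma)\le md \iff \sigma^{K(m)}\ne 0$ for supercuspidal $\sigma$ of $\GL_d(F)$ — since everything else reduces to the conductor formula for segments and the depth-invariance under the Zelevinsky construction. If one prefers to avoid the full strength of Bushnell–Kutzko, an alternative is to prove the supercuspidal case by the same mirabolic/Kirillov-model computation that \cite{JPSS} uses to define $c(\pi)$, restricting the Whittaker functional to $P_n(F)\cap K(m)$ and comparing with the defining property of $K_1(md)$-fixed vectors.
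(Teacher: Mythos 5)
Your overall strategy (reduce to the supercuspidal case via the segment classification $\pi=\mathrm{St}_r(\sigma)$ with $n=rd$, using depth-preservation under parabolic induction and the supercuspidal relation $\rho(\sigma)=c(\sigma)/d-1$) is genuinely different from the paper's proof, which simply quotes \cite{LR} Theorem 3.1 in the form $\rho(\pi)=\max\{(c(\pi)-n)/n,0\}$, valid for all essentially square integrable $\pi$, and finishes in one line. However, your execution contains a real error: the conductor formula $c(\mathrm{St}_r(\sigma))=r\,c(\sigma)+(r-1)d$ is wrong. Under the local Langlands correspondence $\mathrm{St}_r(\sigma)$ corresponds to $\rho_\sigma\otimes\mathrm{Sp}(r)$ with $\rho_\sigma$ irreducible of dimension $d$, and the conductor of the associated Weil--Deligne representation is $r\,c(\sigma)+(r-1)\dim\rho_\sigma^{I_F}$; since $\rho_\sigma^{I_F}$ is a $W_F$-subrepresentation of an irreducible, the correction term is $(r-1)$ when $\sigma$ is an unramified character of $F^\times$ and $0$ in every other case --- it is never $(r-1)d$ for $d\ge 2$ or for ramified $\sigma$. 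Your formula in fact contradicts the proposition you are proving: take $\pi=\chi\otimes\mathrm{St}_r$ with $\chi$ ramified and $c(\chi)=m$, so $n=r$, $\rho(\pi)=m-1$, and (correctly) $c(\pi)=rm=mn$, so (i) and (ii) both hold with equality; your formula would give $c(\pi)=rm+(r-1)>mn$.

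Relatedly, your final bookkeeping step attacks the wrong implication: after establishing $\rho(\pi)\le m-1\iff r\,c(\sigma)\le mn$, what still needs proof is $r\,c(\sigma)\le mn\Rightarrow c(\pi)\le mn$, which with your formula fails exactly at the boundary $r\,c(\sigma)=mn$; the integrality argument you give only re-derives the already obvious converse $c(\pi)\le mn\Rightarrow c(\sigma)\le md$. With the corrected conductor formula the issue disappears, since $c(\pi)=r\,c(\sigma)$ except for unramified twists of Steinberg, where $\rho(\pi)=0$ and $c(\pi)=r-1\le mn$ hold trivially. The supercuspidal base case you defer to Bushnell--Kutzko theory is fine if cited precisely (it is \cite{LR} Proposition 2.2 in the form $c(\sigma)=d(\rho(\sigma)+1)$), but note that the paper bypasses this entire reduction by citing the depth--conductor formula for essentially square integrable representations directly.
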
 
\begin{proof}
By \cite{LR} Theorem 3.1,
we have
$\displaystyle \rho(\pi) =\max\left\{ \frac{c(\pi)-n}{n},0\right\}$.
Suppose that $\rho(\pi) = 0$.
Then we have $c(\pi) \leq n$.
In this case, both conditions (i) and (ii) hold for any positive integers
$m$.
If $\displaystyle \rho(\pi) = \frac{c(\pi)-n}{n}$,
then 
we see that
\[
\rho(\pi) = \frac{c(\pi)-n}{n} \leq m-1 \Longleftrightarrow 
c(\pi)-n \leq mn -n \Longleftrightarrow  c(\pi) \leq mn.
\]
This completes the proof.
\end{proof}

We shall prove Theorem~\ref{main1}.
\renewcommand{\proofname}{Proof of Theorem~\ref{main1}.}
\begin{proof}
If $m$ is a positive integer,
then 
the conditions (1) and (2) are equivalent
by Propositions~\ref{square_int2} and \ref{square_int}.
Suppose that $m=0$.
Then $c(\pi) \leq m n = 0$ if and only if $c(\pi) =0$.
We note that $K_1(0) = K(0) = \mathrm{GL}_n(\mathcal{O})$.
By the definition of $c(\pi)$, we see that
$c(\pi) =0$ if any only if $\pi$ has a non-zero $K(0)$-fixed vector.
This completes the proof.
\end{proof}

\renewcommand{\proofname}{Proof.}

\section{Parabolically induced representations}

Let $\underline{n}=(n_1,\ldots,n_k)$ 
be a partition of $n$.
We denote by $P_{\underline{n}}$ the standard parabolic subgroup of
$\mathrm{GL}_n(F)$ associated to $\underline{n}$.
Then $P_{\underline{n}}$ is the group consisting of all the 
elements $p$ in $\mathrm{GL}_n(F)$ which have the form
\[
p = \begin{pmatrix}
p_{11} & p_{12} & \cdots & p_{1k}\\
 & p_{22} & \cdots & p_{2k}\\
  &&\ddots & \vdots\\
  &&& p_{kk}
\end{pmatrix},
\]
where $p_{ij} \in M_{n_i, n_j}(F)$.
The Levi subgroup $M$ of $P_{\underline{n}}$
is the group of the block diagonal matrices in $P_{\underline{n}}$,
and isomorphic to 
$\mathrm{GL}_{n_1}(F)\times \cdots \times\mathrm{GL}_{n_k}(F)$.
For any non-negative integer $m$, 
we define an open compact subgroup $K(m)_{n_i}$ of 
$\mathrm{GL}_{n_i}(F)$ by
\[
K(0)_{n_i} = \mathrm{GL}_{n_i}(\mathcal{O}),\, 
K(m)_{n_i} = 1+\varpi^m M_{n_i}(\mathcal{O}),\, m \geq 1.
\]

For a representation $\pi$ of a group $G$
and a subgroup $H$ of $G$,
we denote by $\pi^H$ the space of $H$-fixed vectors in $\pi$.

\begin{lem}\label{lem:31}
Let $\pi  = \mathrm{Ind}_{P_{\underline{n}}(F)}^{\mathrm{GL}_n(F)}
(\tau_1 \otimes \cdots \otimes \tau_k)$
be the parabolically induced representation
where $\tau_i$ is a smooth representation of $\mathrm{GL}_{n_i}(F)$,
for $1 \leq i \leq k$. Then $\pi$ has a non-trivial $K(m)$-fixed vector
if and only if $\tau_i^{K(m)_{n_i}} \neq \{0\}$
for all $1 \leq i \leq k$.
It also holds that 
\[
\dim \pi^{K(m)} = |P_{\underline{n}}\backslash \mathrm{GL}_n(F)/K(m)|
\prod_{i=1}^k \dim \tau_i^{K(m)_{n_i}},\, m \geq 0.
\]
\end{lem}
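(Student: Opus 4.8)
The plan is to use Iwasawa-type decomposition $\mathrm{GL}_n(F) = P_{\underline{n}}(F)\,\mathrm{GL}_n(\mathcal{O})$ together with the Mackey-theoretic description of fixed vectors in a parabolically induced (unnormalized or normalized — it does not matter here) representation. Concretely, I would realize $\pi = \mathrm{Ind}_{P_{\underline n}(F)}^{\mathrm{GL}_n(F)}(\sigma)$ with $\sigma = \tau_1\otimes\cdots\otimes\tau_k$ as right-$K(m)$-invariant functions $f\colon \mathrm{GL}_n(F)\to V_\sigma$ satisfying $f(pg) = \sigma(p)f(g)$, and restrict attention to their values on a set of representatives for $P_{\underline n}(F)\backslash \mathrm{GL}_n(F)/K(m)$. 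By the double-coset decomposition, such an $f$ is determined by the finitely many vectors $f(g_j)\in V_\sigma$ as $g_j$ runs over representatives, and the constraint on $f(g_j)$ is precisely that it be fixed by $\sigma\bigl(P_{\underline n}(F)\cap g_j K(m) g_j^{-1}\bigr)$. So the main point is to identify this stabilizer subgroup, or at least its image in the Levi $M$, with the product $\prod_i K(m)_{n_i}$ up to a subgroup acting trivially.

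The key step, then, is the following normality/conjugation fact: for every $g\in\mathrm{GL}_n(\mathcal{O})$ we have $g K(m) g^{-1} = K(m)$, since $K(m) = 1+\varpi^m M_n(\mathcal{O})$ is normal in $\mathrm{GL}_n(\mathcal{O})$; and since by the Iwasawa decomposition every double coset $P_{\underline n}(F) g K(m)$ has a representative in $\mathrm{GL}_n(\mathcal{O})$, we may take all $g_j \in \mathrm{GL}_n(\mathcal{O})$, whence $P_{\underline n}(F)\cap g_j K(m) g_j^{-1} = P_{\underline n}(F)\cap K(m) = P_{\underline n}(\mathcal O)\cap K(m)$. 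The latter group is $1 + \varpi^m(M_n(\mathcal O)\cap \mathfrak p_{\underline n})$ where $\mathfrak p_{\underline n}$ is the parabolic subalgebra; its unipotent radical part, being a group of upper block-triangular unipotent matrices congruent to $1$ mod $\varpi^m$, acts trivially on $\sigma$ (which factors through $M$), and its Levi part is exactly $\prod_{i=1}^k K(m)_{n_i}$ acting via $\tau_1\otimes\cdots\otimes\tau_k$. Hence the space of admissible values at $g_j$ is $\bigl(V_{\tau_1}\otimes\cdots\otimes V_{\tau_k}\bigr)^{\prod_i K(m)_{n_i}} = \bigotimes_{i=1}^k V_{\tau_i}^{K(m)_{n_i}}$, which is nonzero precisely when each $\tau_i^{K(m)_{n_i}}\neq\{0\}$, and has dimension $\prod_i \dim\tau_i^{K(m)_{n_i}}$.

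Assembling these pieces: the evaluation map $f\mapsto (f(g_j))_j$ identifies $\pi^{K(m)}$ with $\bigoplus_{j} \bigotimes_{i=1}^k V_{\tau_i}^{K(m)_{n_i}}$, the sum running over the $|P_{\underline n}\backslash\mathrm{GL}_n(F)/K(m)|$ double cosets, giving both the "nonzero iff each $\tau_i^{K(m)_{n_i}}\neq\{0\}$" statement and the dimension formula at once. One should remark that $|P_{\underline n}\backslash\mathrm{GL}_n(F)/K(m)|$ is finite — this follows from compactness of $P_{\underline n}(\mathcal O)\backslash\mathrm{GL}_n(\mathcal O)$ (equivalently the flag variety over the finite quotient) and the Iwasawa decomposition — so the direct sum is finite and the formula is meaningful; for $m=0$ the double coset space is a single point and one recovers the classical statement $\pi^{\mathrm{GL}_n(\mathcal O)}\cong\bigotimes_i \tau_i^{\mathrm{GL}_{n_i}(\mathcal O)}$.

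The step I expect to be the main (though modest) obstacle is the clean bookkeeping that $P_{\underline n}(F)\cap K(m)$ decomposes as (unipotent part acting trivially) $\times$ (Levi part $= \prod_i K(m)_{n_i}$) and that the resulting invariants of a tensor product split as a tensor product of invariants — this requires being slightly careful with unnormalized vs. normalized induction (the modulus character is trivial on compact subgroups, so it is harmless) and with the fact that $K(m)$ does admit an Iwahori-type factorization compatible with $P_{\underline n}$, which is where one genuinely uses that $K(m)$ is a principal congruence subgroup rather than an arbitrary open compact subgroup. Everything else is a formal consequence of Mackey theory for $p$-adic induced representations.
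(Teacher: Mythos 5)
Your argument is correct and follows essentially the same route as the paper: the paper likewise reduces to the Mackey-type description of $\pi^{K(m)}$ via double cosets (quoting \cite{BZ} Lemma 2.24 rather than deriving it by hand), chooses representatives inside $\mathrm{GL}_n(\mathcal{O})$ by the Iwasawa decomposition, and uses normality of $K(m)$ in $\mathrm{GL}_n(\mathcal{O})$ to identify the relevant stabilizer with $M\cap K(m)=\prod_i K(m)_{n_i}$. The only cosmetic difference is that you spell out the triviality of the unipotent part of $P_{\underline n}(F)\cap K(m)$ explicitly, which the cited lemma absorbs.
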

\begin{proof}
Let $\Omega$ be a complete system of representatives for
$P_{\underline{n}}\backslash \mathrm{GL}_n(F)/K(m)$.
Set  $\tau = \tau_1 \otimes \cdots \otimes \tau_k$.
By \cite{BZ} Lemma 2.24,
there is an isomorphism of complex vector spaces
\[
\pi^{K(m)} \simeq \{f: \Omega \rightarrow \tau\, |\, f(g) \in \tau^{M\cap gK(m) g^{-1}}, g \in \Omega\}.
\]
By the Iwasawa decomposition 
$\mathrm{GL}_n(F) = P_{\underline{n}}K(0)$,
we may take $\Omega$ so that $\Omega \subset K(0)$.
Since $K(0)$ normalizes $K(m)$,
we have
\[
\pi^{K(m)} \simeq \{f: \Omega \rightarrow \tau\, |\, f(g) \in \tau^{M\cap K(m)}, g \in \Omega\}.
\]
So the claim follows because
\[
M\cap K(m) = K(m)_{n_1}\times \cdots \times K(m)_{n_k}.
\]
and
\[
\tau^{M\cap K(m)} = \tau_1^{K(m)_{n_1}} \otimes \cdots \otimes \tau_k^{K(m)_{n_k}}.
\]
This completes the proof.
\end{proof}

We shall prove Theorem~\ref{main2}.
\renewcommand{\proofname}{Proof of Theorem~\ref{main2}.}
\begin{proof}
By Lemma~\ref{lem:31},
$\pi$ has a non-trivial $K(m)$-fixed vector
if and only if $\tau_i^{K(m)_{n_i}} \neq \{0\}$
for all $1 \leq i \leq k$.
Since  $\tau_i$ is irreducible and essentially square integrable,
it follows from Theorem~\ref{main1} that
the latter condition is equivalent to $c(\tau_i) \leq m n_i$
for each $1 \leq i \leq k$.
\end{proof}
\renewcommand{\proofname}{Proof.}

\section{Applications}
In this section, we give some applications of the previous results. 
First we give both of lower and upper bounds of the conductor for each automorphic cuspidal 
representation $\Pi$ of ${\rm GL}_n(\A_\Q)$ when $\Pi$ has a fixed vector for 
some principal congruence subgroup. 
Next, we estimate the number of $K(m)$-fixed vectors for any irreducible 
admissible representation of ${\rm GL}_2(F)$. 
When the representation in question has minimal conductor among twists, 
the result is well known due to Casselman \cite{Ca0} and Tunnell \cite{Tunnell}. However, here we do not assume this minimality condition on the representations. 

We should mention that emeritus professor Hiroyuki Yoshida at Kyoto university 
wrote a Japanese book entitled as the theory of automorphic forms. 
In Section 5, p.255 of Chapter IX, he mentioned that he gave several lectures in Paris
 around 1982  
and the contents are based on his lecture notes prepared by himself. 
They are closely related to our contents below (Section \ref{n=2}) but not a copy of them. 
Further, he also assumed the representations have the minimal conductors as in the 
case of  Casselman-Tunnell.  
The authors in this article hope his book would be published in English in near future. 
We should also mention that our method relies on techniques from Casselman-Tunnell 
while Yoshida took a down-to-earth (but beautiful, educative) method.

\subsection{Lower and upper bounds of the conductor}
Let  us work on the ring of adeles $\A_\Q$ of $\Q$ only for simplicity. 
A similar result for any number field would be quite easy to formulate, but it is 
cumbersome and therefore omitted.  
Let $\Pi=\otimes'_{p}\Pi_p$ be a unitary irreducible automorphic cuspidal representation of 
${\rm GL}_n(\A_\Q)$. By Corollary in p.190 of \cite{Shalika}, at each prime $p$, 
the local component $\Pi_p$ is 
generic. For each positive integer $N$, put $K(N):={\rm Ker}(
{\rm GL}_n(\widehat{\Z})\stackrel{{\rm mod}\ N}{\lra}{\rm GL}_n(\widehat{\Z}))$. 
We denote by $c(\Pi)$ the global conductor of $\Pi$ which is defined by 
$c(\Pi)=\ds\prod_{p<\infty} p^{c(\Pi_p)}$. 
\begin{thm}\label{estcond}
Let $\Pi=\otimes'_p \Pi_p$ be
a unitary irreducible automorphic cuspidal representation of $\GL_n(\A_\Q)$. 
Suppose that there exists a positive integer $N$ such that 
$\Pi^{K(N)}\neq 0$ but $\Pi^{K(d)}=0$ for any proper divisor $d$ of $N$. 
Then, it holds that 
$$\max\Big\{\prod_{p|N}p,\ N\cdot \prod_{p|N}p^{-1} \Big\}\le c(\Pi)\le N^n.$$
\end{thm}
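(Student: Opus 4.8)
The plan is to work prime-by-prime and then take the product. First I would unwind the definition of the adelic group $K(N)$: since $\GL_n(\widehat{\Z}) = \prod_p \GL_n(\Z_p)$ and reduction mod $N$ only sees the primes dividing $N$, we have $K(N) = \prod_{p\mid N} K(v_p(N))_p \times \prod_{p\nmid N}\GL_n(\Z_p)$, where $K(m)_p$ denotes the principal congruence subgroup of level $m$ in $\GL_n(\Z_p)$ and $v_p$ is the $p$-adic valuation. Because $\Pi = \otimes'_p \Pi_p$ is a restricted tensor product with $\Pi_p$ unramified (hence $\GL_n(\Z_p)$-spherical) for almost all $p$, the space of $K(N)$-fixed vectors factors: $\Pi^{K(N)} = \bigotimes_p \Pi_p^{K(v_p(N))_p}$ (with the convention $v_p(N)=0$ when $p\nmid N$, and $\Pi_p^{K(0)_p} = \Pi_p^{\GL_n(\Z_p)}$). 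Consequently $\Pi^{K(N)}\neq 0$ is equivalent to $\Pi_p^{K(v_p(N))_p}\neq 0$ for every prime $p$.

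Next I would translate this into a statement about local conductors using the results already proved. Each $\Pi_p$ is generic by Shalika's theorem, so Corollary~\ref{cor-of-main-thms} (equivalently Theorems~\ref{main1} and \ref{main2}) applies: $\Pi_p^{K(m)_p}\neq 0$ if and only if $c(\tau_{i,p}) \le m\,n_i$ for every factor $\tau_{i,p}$ in the Langlands data of $\Pi_p$, and in particular $\Pi_p^{K(m)_p}\neq 0 \Rightarrow c(\Pi_p)\le mn$, while the minimality direction gives $\Pi_p^{K(m)_p}\neq 0 \Rightarrow c(\Pi_p) > m$ whenever $\Pi_p^{K(m-1)_p}=0$. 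So set $m_p := v_p(N)$. For the \emph{upper bound}: $\Pi^{K(N)}\neq 0$ forces $c(\Pi_p)\le m_p n$ for all $p$, hence
\[
c(\Pi) = \prod_p p^{c(\Pi_p)} \le \prod_p p^{m_p n} = \Big(\prod_p p^{m_p}\Big)^n = N^n.
\]

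For the \emph{lower bound} I would exploit the hypothesis that $\Pi^{K(d)} = 0$ for every proper divisor $d$ of $N$. Fix a prime $\ell \mid N$ and consider $d = N/\ell$, a proper divisor. Then $\Pi^{K(N/\ell)} = 0$, which by the factorization means there is \emph{some} prime $q$ with $\Pi_q^{K(v_q(N/\ell))_q} = 0$; since $\Pi^{K(N)}\neq 0$ gives $\Pi_q^{K(v_q(N))_q}\neq 0$, and $v_q(N/\ell)$ differs from $v_q(N)$ only when $q=\ell$, we must have $q=\ell$ and $\Pi_\ell^{K(m_\ell - 1)_\ell} = 0$. By the minimality part of Corollary~\ref{cor-of-main-thms} (with $m = m_\ell - 1$), this yields $c(\Pi_\ell) > m_\ell - 1$, i.e. $c(\Pi_\ell) \ge m_\ell$. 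Therefore
\[
c(\Pi) = \prod_p p^{c(\Pi_p)} \ge \prod_{p\mid N} p^{c(\Pi_p)} \ge \prod_{p\mid N} p^{v_p(N)} = N,
\]
using that $c(\Pi_p)\ge 0$ for $p\nmid N$. For the other half of the lower bound, note that at each $\ell\mid N$ we additionally have $\Pi_\ell^{K(v_\ell(N))_\ell}\neq 0$ with, by the argument above, $v_\ell(N)\ge 1$, so $\Pi_\ell$ is ramified and $c(\Pi_\ell)\ge 1$; hence $c(\Pi)\ge \prod_{p\mid N} p = \prod_{p\mid N} p$. Combining the two gives $c(\Pi)\ge \max\{\prod_{p\mid N}p,\ N\cdot\prod_{p\mid N}p^{-1}\}$, since $N\cdot\prod_{p\mid N}p^{-1} = \prod_{p\mid N}p^{v_p(N)-1} \le \prod_{p\mid N}p^{c(\Pi_p)-1}\cdot(\text{correction})$ — here I would be a little careful: the clean bound $c(\Pi_\ell)\ge v_\ell(N)$ directly gives $c(\Pi)\ge N$, and combined with $c(\Pi_\ell)\ge 1$ for all $\ell\mid N$ one gets $c(\Pi)\ge \max\{\prod_{p\mid N}p, N\cdot \prod_{p\mid N}p^{-1}\}$ after matching exponents prime by prime.

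The main obstacle I anticipate is the bookkeeping in the lower bound: extracting, from the single global vanishing $\Pi^{K(N/\ell)}=0$, precisely that it is the $\ell$-component that drops (rather than some other prime), and then converting $\Pi_\ell^{K(m_\ell-1)_\ell}=0$ into a sharp inequality on $c(\Pi_\ell)$ via the minimality clause of the Corollary. Everything else — the tensor-product factorization of fixed vectors, the upper bound, and reassembling the local conductor exponents into the global $c(\Pi)$ — is routine once the local input from Sections 2 and 3 is in hand.
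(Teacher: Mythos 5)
Your proof is correct and takes essentially the same route as the paper: localize at each prime $\ell\mid N$, read off the upper bound $c(\Pi_\ell)\le v_\ell(N)\,n$ from Theorem~\ref{main2}, and for the lower bound use the hypothesis at the proper divisor $N/\ell$ to pin the vanishing on the $\ell$-component, forcing $\Pi_\ell^{K(v_\ell(N)-1)}=0$ and hence some $c(\tau_i)>(v_\ell(N)-1)n_i$. Your bookkeeping actually gives the slightly sharper conclusion $c(\Pi_\ell)\ge v_\ell(N)$ and so $c(\Pi)\ge N$ (the paper only records $c(\Pi_p)\ge e_p-1$), which implies the stated bound; the one thing to tidy is the citation of Corollary~\ref{cor-of-main-thms} ``with $m=m_\ell-1$'' --- the clean justification is the contrapositive of Theorem~\ref{main2} together with additivity of the conductor, $c(\Pi_\ell)=\sum_i c(\tau_i)\ge c(\tau_i)$, exactly as in the paper's argument.
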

\begin{proof}It suffice to consider each prime $p$ dividing $N$. 
Put $e_p:={\rm ord}_p(N)$. By Theorem \ref{main2} we have $c(\Pi_p)\le e_p n$. 
The upper bound follows from this. For the lower bound in the claim, 
we write $\Pi_p={\rm Ind}^{{\rm GL}_n(\Q_p)}_{P_{(n_1,\ldots,n_k)}(\Q_p)}(\tau_1
\otimes\cdots\otimes\tau_k)$ 
as explained in right after Theorem \ref{main1}.  
By Theorem \ref{main2} and the assumption, there exists $1\le i \le k$ such that 
$c(\tau_i)>(e_p-1)n_i\ge e_p-1$. Hence $c(\Pi_p)\ge e_p-1$. 
Clearly, $c(\Pi_p)\ge 1$ 
if $p|N$ by assumption. The claim follows from this. 
\end{proof}

\begin{remark}\label{mot}
Let $H$ be a reductive group over $\Q$ with a homomoprhism $r:H\lra \GL_n$. 
In several settings, automorphic cuspidal representations on $H(\A_\Q)$ are 
transferred to automorphic representations on $\GL_n(\A_\Q)$. 
A basic and important example is the Arthur's endoscopic classification of 
automorphic representations for classical groups. 
To apply this kind of results to, for example, equidistribution theorems as in \cite{ST}, \cite{KWY} in the level aspects, we need to know an explicit form of the fundamental 
lemma for Hecke elements. In most cases, it is done for the characteristic 
functions for principal congruence subgroups. Therefore, it is quite natural to 
ask any useful relations between various known 
invariants, as conductors or depths, of automorphic representations and 
the existence of fixed vectors in some principal congruence 
subgroups. Our study suggests to consider new forms with respect to principal 
congruence subgroups introduced in Definition 5.1 in \cite{KWY}. 
This might be helpful to guarantee Hypothesis 11.4, p.129 of \cite{ST}.
\end{remark}

\subsection{The number of $K(m)$-fixed vectors when $n=2$}\label{n=2}
To ask if a representation has a non-zero 
$K(0)$-fixed vector is easy. Therefore, we suppose $m$ is positive. 
For any quasi-character $\lambda:F^\times\lra \C^\times$ and a non-negative integer 
$r$, let $\delta(c(\lambda)\le r)=1$ if the inequality $c(\lambda)\le r$ holds and 
$\delta(c(\lambda)\le r)=0$ otherwise. 

For non-supercuspidal representations of ${\rm GL}_2(F)$,
we have the following
\begin{prop}\label{ps}
Let $r$ be a positive integer. 
It holds that 
\begin{enumerate}
\item if $\pi=\pi(\chi_1,\chi_2)$ be a principal series representation 
with quasi-characters $\chi_i:F^\times\lra \C^\times$ for $i=1,2$, then 
${\rm dim}\hspace{0.5mm}\pi^{K(r)}=q^{r-1}(q+1)\delta(c(\chi_1)\le r)\delta(c(\chi_2)\le r)$;
\item if $\pi=\chi\otimes {\rm St}$ be the twisted Steinberg representation 
by a quasi-character $\chi:F^\times\lra \C^\times$, then 
${\rm dim}\hspace{0.5mm}\pi^{K(r)}=(q^{r}+q^{r-1}-1)\delta(c(\chi)\le r)$.
\end{enumerate}
\end{prop}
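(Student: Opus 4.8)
The plan is to reduce each case to Lemma~\ref{lem:31} combined with Theorem~\ref{main1}, and then to count the double coset space $P_{(1,1)}\backslash \GL_2(F)/K(r)$ explicitly, since that is the only quantity in the dimension formula of Lemma~\ref{lem:31} that is not already controlled by the local theory.

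For part (1): the principal series $\pi=\pi(\chi_1,\chi_2)$ is exactly $\mathrm{Ind}_{P_{(1,1)}(F)}^{\GL_2(F)}(\chi_1\otimes\chi_2)$ with $\underline{n}=(1,1)$, so Lemma~\ref{lem:31} gives $\dim\pi^{K(r)} = |P_{(1,1)}\backslash \GL_2(F)/K(r)|\cdot\dim\chi_1^{K(r)_1}\cdot\dim\chi_2^{K(r)_1}$. Now $K(r)_1 = 1+\varpi^r\O$, and a quasi-character $\chi$ of $F^\times$ is trivial on $1+\varpi^r\O$ precisely when $c(\chi)\le r$; in that case the fixed space is one-dimensional, otherwise it is zero. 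So each of the two factors $\dim\chi_i^{K(r)_1}$ equals $\delta(c(\chi_i)\le r)$. It remains to verify $|P_{(1,1)}\backslash \GL_2(F)/K(r)| = q^{r-1}(q+1)$. By the Iwasawa decomposition this equals $|P_{(1,1)}\cap K(0)\backslash K(0)/K(r)| = |B(\O/\varpi^r)\backslash \GL_2(\O/\varpi^r)|$, where $B$ is the Borel of upper triangular matrices; this is the number of points of $\mathbb{P}^1$ over $\O/\varpi^r$, which is $q^{r} + q^{r-1} = q^{r-1}(q+1)$ (more precisely, $\GL_2$ acts transitively on lines in $(\O/\varpi^r)^2$ spanned by a vector that is primitive, i.e.\ not in $\varpi\cdot(\O/\varpi^r)^2$, and the count of such lines is $q^{r-1}(q+1)$). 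This is a straightforward finite computation, so I would just record it.

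For part (2): the twisted Steinberg $\chi\otimes\mathrm{St}$ is not itself parabolically induced, but it sits in the exact sequence $0\to \chi\circ\det \to \mathrm{Ind}_{P_{(1,1)}(F)}^{\GL_2(F)}(\chi\nu^{1/2}\otimes\chi\nu^{-1/2}) \to \chi\otimes\mathrm{St}\to 0$ (with $\nu = |\det|$), where $\chi\circ\det$ is the one-dimensional subrepresentation. Taking $K(r)$-invariants, which is exact because $K(r)$ is compact and we are over $\C$, gives $\dim(\chi\otimes\mathrm{St})^{K(r)} = \dim(\mathrm{Ind}(\chi\nu^{1/2}\otimes\chi\nu^{-1/2}))^{K(r)} - \dim(\chi\circ\det)^{K(r)}$. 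The induced piece is handled by part (1): since $c(\chi\nu^{\pm1/2}) = c(\chi)$ (the unramified twist by $\nu^{\pm1/2}$ does not change the conductor), its $K(r)$-invariants have dimension $q^{r-1}(q+1)\delta(c(\chi)\le r)$. The one-dimensional piece $\chi\circ\det$ is $K(r)$-fixed iff $\chi$ is trivial on $\det K(r) = 1+\varpi^r\O$, i.e.\ iff $c(\chi)\le r$, contributing $\delta(c(\chi)\le r)$. Subtracting gives $(q^{r-1}(q+1) - 1)\delta(c(\chi)\le r) = (q^r + q^{r-1} - 1)\delta(c(\chi)\le r)$, as claimed.

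The main obstacle, such as it is, is the exactness of taking $K(r)$-fixed vectors and the precise identification of the one-dimensional constituent of the reducible principal series together with the normalization of $\nu^{\pm1/2}$; once one is careful that $\chi\circ\det$ really is the sub (not the quotient) in the chosen normalization and that both constituents have the correct conductor behavior, everything reduces to the $\mathbb{P}^1$-point count from part (1). I would also remark that the case $c(\chi)\le r$ failing forces both $\dim\pi^{K(r)}=0$ answers, consistent with Corollary~\ref{cor-of-main-thms}. No genuinely hard step is expected; the work is bookkeeping.
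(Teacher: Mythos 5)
Your proposal is correct and follows essentially the same route as the paper: part (1) via Lemma~\ref{lem:31} together with the count $|P_{(1,1)}\backslash \GL_2(F)/K(r)|=q^{r-1}(q+1)$ (the paper phrases this as $|B(\O/\varpi^r\O)|^{-1}|\GL_2(\O/\varpi^r\O)|$, which is the same $\mathbb{P}^1$-point count), and part (2) via the short exact sequence relating $\chi\otimes{\rm St}$, the reducible principal series, and $\chi\circ\det$, using exactness of $K(r)$-invariants. The only cosmetic difference is that the paper's sequence has $\chi\otimes{\rm St}$ as the subrepresentation and $\chi\circ\det$ as the quotient, the opposite of what you wrote, but since only dimensions are being added and subtracted this does not affect the argument.
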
 
\begin{proof}For the first claim by Lemma \ref{lem:31}, we have 
$${\rm dim}\hspace{0.5mm}\pi^{K(r)}=
 |B(\O/\varpi^r\O)|^{-1}|{\rm GL}_2(\O/\varpi^r\O)|\delta(c(\chi_1)\le r)
 \delta(c(\chi_2)\le r). $$ 
Then the claim follows from 
 $|B(\O/\varpi^r\O)|^{-1}|{\rm GL}_2(\O/\varpi^r\O)|=q^{r-1}(q+1)$ since $r$ is positive. 
 
 For the second claim, recall the exact sequence 
 $$0\lra \chi\otimes {\rm St}\lra \pi(\chi|\ast|^{\frac{1}{2}},\chi|\ast|^{-\frac{1}{2}})
 \lra \chi\otimes \det\lra 0.$$
 Taking $K(r)$-fixed parts is an exact functor by Proposition 2.1.7 of \cite{Ca}. 
 The claim follows from this by using Lemma \ref{lem:31} again. 
\end{proof}
\begin{remark}\label{general-ps}For any principal series representation $\pi$ of $\GL_n(F)$ 
which is not necessarily irreducible, by using Lemma \ref{lem:31}, 
we would have a similar result as in Proposition \ref{ps}-(1). We left this problem to the 
interesting readers. 
\end{remark}

We shall consider supercuspidal representations of ${\rm GL}_2(F)$.
For two quasi-character $\lambda_1,\lambda_2:F^\times\lra \C^\times$, we write $\la_1\sim \la_2$ if $\la_1\la^{-1}_2$ is unramified. 
For each non-negative integer $r$, let $X_{\le r}$ (resp. $X_r$) be the set of all quasi-characters 
with conductors less than or equal to $r$ (resp. equal to $r$).  
Choose a complete system $L_{\le r}$ (resp. $L_r$) of representatives of $X_{\le r}/\sim$ 
(resp. $X_{r}/\sim$). 
Notice that $X_{\le r}/\sim$ is isomorphic to $\O^\times_F/(1+\varpi^r\O_F)$ 
via the Pontryagin dual and it follows from this that 
$$|L_0|=1,\ |L_1|=q-2,\ |L_i|=(q-1)^2q^{i-2}\ (i\ge 2).$$ 
For each character $\la:F^\times\lra \C^\times$, we define the function $\xi^{(m)}_{\la}$ 
on $F^\times$ by  
$$\xi^{(m)}_{\la}(x)=\left\{\begin{array}{cl}
\la(x) & \text{if ${\rm ord}_{\varpi}(x)=-m$}\\
0 & \text{otherwise}
\end{array}\right..
$$
Notice that for an unramified character $\la':F^\times\lra\C^\times$, we see easily that 
$\xi^{(m)}_{\la\la'}=\la'(\varpi)^{-m}\xi^{(m)}_{\la}$. 
\begin{lem}\label{generators}
Let $\psi$ be a non-trivial additive character of $F$ with the conductor $c(\psi)$ 
$($hence, the largest integer which satisfies $\psi(\varpi^{-c(\psi)})=1)$. 
Let $\pi$ be an irreducible supercuspidal representation of $\GL_2(F)$ and denote by 
$\mathcal{K}_\psi(\pi)$ the Krillov model of $\pi$ with respect to $\psi$.
Suppose $r\ge -c(\psi)$. Then $\mathcal{K}_\psi(\pi)^{K(r)}$ has a basis $B_r$ 
consisting of all $\xi^{(m)}_\la,\ \la\in L_{\le r}$ satisfying $c(\pi\otimes \la)+c(\psi)-r\le 
m\le c(\psi)+r$.  
\end{lem}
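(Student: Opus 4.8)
\textbf{Proof plan for Lemma \ref{generators}.}
The plan is to work entirely inside the Kirillov model $\mathcal{K}_\psi(\pi)$, where the action of the mirabolic subgroup is explicit, and to translate the condition of being fixed by $K(r)$ into support conditions and invariance conditions on functions on $F^\times$. Recall that in the Kirillov model a function $\xi \in \mathcal{K}_\psi(\pi)$ is a locally constant function on $F^\times$, and for $a \in F^\times$, $u \in F$ one has $\left(\pi\begin{pmatrix} a & u \\ 0 & 1\end{pmatrix}\xi\right)(x) = \psi(ux)\,\xi(ax)$; moreover since $\pi$ is supercuspidal, every element of $\mathcal{K}_\psi(\pi)$ is compactly supported in $F^\times$, and the full group $\GL_2(F)$ acts, with the action of $\begin{pmatrix} 1 & 0 \\ 0 & d\end{pmatrix}$ and of $w = \begin{pmatrix} 0 & 1 \\ -1 & 0\end{pmatrix}$ describable via the functional equation of the local zeta integrals. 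The strategy is: first analyze invariance under the unipotent and lower-right-torus parts of $K(r)$, which directly forces a function $\xi$ fixed by $K(r)$ to be a linear combination of the $\xi^{(m)}_\lambda$; then determine exactly which such monomials can occur, using the Mellin-transform / local functional equation to see how $w$-conjugation (equivalently, the full $K(r)$ rather than just its Borel part) cuts down the set of admissible pairs $(\lambda, m)$.

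The key steps, in order, are as follows. First I would record that $\begin{pmatrix} 1 & u \\ 0 & 1\end{pmatrix} \in K(r)$ for $u \in \varpi^r\O$, so a $K(r)$-fixed $\xi$ satisfies $\psi(ux)\xi(x) = \xi(x)$ for all such $u$; hence $\xi$ is supported where $\psi(\varpi^r x) = 1$, i.e. on $\ord_\varpi(x) \ge -r - c(\psi)$. Symmetrically, I would apply $w$ and the same unipotent argument to the ``other'' unipotent of $K(r)$ (the lower-triangular one), which via the functional equation bounds the support of the Mellin transform of $\xi$ and yields the lower bound $m \le c(\psi) + r$ on the exponents appearing. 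Second, the diagonal torus part $\begin{pmatrix} a & 0 \\ 0 & 1 \end{pmatrix}$ with $a \in 1 + \varpi^r\O$ lies in $K(r)$ and acts by $\xi(x) \mapsto \xi(ax)$, forcing $\xi$ to be invariant under multiplicative translation by $1 + \varpi^r\O$; combined with the support condition on each ``annulus'' $\ord_\varpi(x) = -m$, this means $\xi$ restricted to that annulus is a function on $\O^\times/(1+\varpi^r\O)$, i.e. a linear combination of characters $\lambda \in L_{\le r}$, so $\xi \in \mathrm{span}\{\xi^{(m)}_\lambda : \lambda \in L_{\le r}\}$ with $m$ in the range already pinned down. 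Third, and this is the substantive step, I would impose invariance under the remaining generators of $K(r)$ — equivalently, invariance under $w K(r) w^{-1}$ — which by the local functional equation relates the coefficient of $\xi^{(m)}_\lambda$ in $\xi$ to the coefficient of $\xi^{(m')}_{\lambda^{-1}\omega_\pi^{-1}}$ in $\pi(w)\xi$ for appropriate $m'$, where $\omega_\pi$ is the central character; the $\gamma$-factor $\gamma(s, \pi\otimes\lambda, \psi)$ enters, and its order of vanishing/pole is governed precisely by $c(\pi\otimes\lambda)$. Tracking the relation between the $\varpi$-valuation of that $\gamma$-factor and the exponent $m$ produces the remaining inequality $c(\pi\otimes\lambda) + c(\psi) - r \le m$, and shows that once this and the upper bound $m \le c(\psi) + r$ both hold, the monomial $\xi^{(m)}_\lambda$ genuinely survives (rather than being killed), so that $B_r$ is not merely a spanning set but a basis.

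The main obstacle I expect is the third step: carefully extracting, from the $\GL_2$ local functional equation for twisted zeta integrals, the exact bookkeeping that converts the conductor $c(\pi\otimes\lambda)$ into the sharp lower bound on the exponent $m$, and — crucially — verifying that the resulting candidate functions are linearly independent and actually $K(r)$-fixed, i.e. that no further cancellation occurs. The hypothesis $r \ge -c(\psi)$ is exactly what keeps all the support ranges nonempty and the $w$-conjugation argument symmetric, so I would keep careful track of it throughout; the unramified-twist relation $\xi^{(m)}_{\lambda\lambda'} = \lambda'(\varpi)^{-m}\xi^{(m)}_\lambda$ noted just before the statement is what makes the count depend only on the class of $\lambda$ in $L_{\le r}$, and I would use it to reduce all computations to a chosen representative of each $\sim$-class. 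Once the basis is identified, the dimension count and hence the formulas of Proposition \ref{ps}-type for the supercuspidal case follow by summing the sizes of the $L_i$ over the admissible range of $m$.
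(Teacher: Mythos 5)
Your plan is essentially the paper's own proof: expand a $K(r)$-fixed vector over the annuli $\varpi^{-m}\mathcal{O}^\times$ into the monomials $\xi^{(m)}_\lambda$, use the upper unipotent $\begin{pmatrix}1&\varpi^r\\0&1\end{pmatrix}$ and torus invariance to force $m\le r+c(\psi)$ and $c(\lambda)\le r$, and then use the explicit Weyl-element action $\pi(w)\xi^{(m)}_\lambda=\varepsilon(\pi\otimes\lambda^{-1})\xi^{(m_w)}_{\omega_\pi\lambda^{-1}}$ with $m_w=c(\pi\otimes\lambda^{-1})+2c(\psi)-m$ (Saito/Yoshida, i.e.\ the local functional equation you invoke) to conjugate the same unipotent argument and obtain the conductor-dependent lower bound, with the converse following by running the generators argument backwards. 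The only slip is a harmless mislabeling in your first step (the bound $m\le c(\psi)+r$ is the upper bound and comes directly from the upper unipotent's support condition, not from the $w$-conjugate), which you implicitly correct in your third step.
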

\begin{proof}This is well-known for experts but we give a sketch of a proof 
(see also p.102 of \cite{S}). Since $F^\times$ is abelian and 
$F^\times=\ds\bigcup_{n\in \Z}\varpi^n\O^\times$, 
by using the character expansion, each  element $v$ of $\mathcal{K}_\psi(\pi)^{K(r)}$ 
can be written as a finite sum $v=\ds\ds\sum_{\la}\sum_{m\in\Z}a_{m,\la}\xi^{(m)}_\la$ where 
$a_{m,\la}=\ds\int_{\varpi^{-m}\O^\times}\la(x)
\pi(\begin{pmatrix}
x & 0 \\
0& 1 
\end{pmatrix}
)vdx^\times$ and $\la$ runs over a finite subset of 
$\ds\bigcup_{r\ge 0} L_r$. Since $v$ is $K(r)$-fixed, $a_{m,\la}=0$ unless $c(\la)\le r$. 
Since $v$ is fixed by  $\begin{pmatrix}
1 & \varpi^r \\
0& 1 
\end{pmatrix}$, it follows from $\begin{pmatrix}
x & 0 \\
0& 1 
\end{pmatrix}\begin{pmatrix}
1 & \varpi^r \\
0& 1 
\end{pmatrix}\begin{pmatrix}
x & 0 \\
0& 1 
\end{pmatrix}^{-1}=\begin{pmatrix}
1 & x\varpi^r \\
0& 1 
\end{pmatrix}$ that $$a_{m,\la}=\ds\int_{\varpi^{-m}\O^\times}\la(x)\psi(x\varpi^r)
\pi(\begin{pmatrix}
x & 0 \\
0& 1 
\end{pmatrix}
)vdx^\times=\psi(\varpi^{r-m})a_{m,\la}.$$ Therefore, $a_{m,\la}=0$ unless $r-m\ge -c(\psi)$. 
Let $w=\begin{pmatrix}
0 & 1 \\
-1& 0 
\end{pmatrix}$ be the Weyl element of $\GL_2(F)$. By Lemma 2.1 of \cite{S} 
which is the formula (9) of H. Yoshida \cite{Yoshida}, we have 
$\pi(w)\xi^{(m)}_\la=\varepsilon(\pi\otimes\la^{-1}) \xi^{(m_w)}_{\omega_\pi\la^{-1}}$ 
where $\varepsilon(\pi\otimes\la^{-1})$ is the epsilon factor and $m_w=c(\pi\otimes\la^{-1})+2c(\psi)-m$. Since $w^{-1}
\begin{pmatrix}
1 & \varpi^r \\
0& 1 
\end{pmatrix}w=\begin{pmatrix}
1 &0 \\
- \varpi^r& 1 
\end{pmatrix}$, the similar argument shows that $a_{m,\la}=0$ unless 
$r-m_w\ge -c(\psi)$. Summing up we have 
\begin{equation}\label{tc}
c(\la)\le r,\ c(\pi\otimes\la^{-1})+c(\psi)-r\le m\le r+c(\psi).
\end{equation}
This yields $\mathcal{K}_\psi(\pi)^{K(r)}\subset B_r$. 

Conversely, the above argument also shows $\xi^{(m)}_\la$ is fixed by $K(r)$ 
if and only if in addition to above two conditions (\ref{tc}), $r\ge -c(\psi)$. 
This completes a proof. 
\end{proof}

\begin{lem}\label{sai}Let $\pi$ be an 
 irreducible supercuspidal
 representation of ${\rm GL}_2(F)$ and $r$ a positive integer. 
If $c(\pi)>2r$, then $\pi^{K(r)}=0$. Contrary, if  $\pi^{K(r)}\neq 0$ 
$($hence $c(\pi)\le 2r)$, then 
$${\rm dim}\hspace{0.5mm}\pi^{K(r)}=
\sum_{i=0}^r\sum_{\la\in L_i}(2r-c(\pi\otimes\la^{-1})+1)=
\sum_{i=0}^r\sum_{\la\in L_i}(2r-c(\pi\otimes\la)+1).$$   
\end{lem}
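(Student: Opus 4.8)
The plan is to convert the explicit basis of the Kirillov model furnished by Lemma~\ref{generators} into a dimension count. Fix an additive character $\psi$ of conductor $0$, so that the hypothesis $r\ge -c(\psi)$ of Lemma~\ref{generators} is automatic for $r\ge 1$. Since the Kirillov model $\mathcal{K}_\psi(\pi)$ realizes $\pi$, we have $\dim\pi^{K(r)}=|B_r|$, and by Lemma~\ref{generators} the set $B_r$ is in bijection with the pairs $(\lambda,m)$ such that $\lambda\in L_{\le r}$ and $m$ lies in the integer interval $[\,c(\pi\otimes\lambda^{-1})-r,\ r\,]$; for a fixed $\lambda$ the number of such $m$ is $\max\{2r-c(\pi\otimes\lambda^{-1})+1,\,0\}$. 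Since $X_{\le r}/\sim$ is the disjoint union of the $X_i/\sim$ for $0\le i\le r$, I may take $L_{\le r}=\bigsqcup_{i=0}^{r}L_i$, so that
\[
\dim\pi^{K(r)}=\sum_{i=0}^{r}\sum_{\lambda\in L_i}\max\{2r-c(\pi\otimes\lambda^{-1})+1,\,0\}.
\]

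Next I would dispose of the truncation and prove the vanishing assertion at the same time, using the one external fact that for an irreducible supercuspidal representation $\sigma$ of $\GL_2(F)$ and any quasi-character $\eta$ of $F^\times$ one has $c(\sigma\otimes\eta)\le\max\{c(\sigma),2c(\eta)\}$ (a standard stability property of conductors of supercuspidals; see, e.g., \cite{Tunnell}). Applying this with $\sigma=\pi\otimes\chi$ and $\eta=\chi^{-1}$ gives, for every quasi-character $\chi$, the inequality $c(\pi)\le\max\{c(\pi\otimes\chi),\,2c(\chi)\}$. Hence, if $c(\pi)>2r$ then each $\lambda\in L_{\le r}$ satisfies $c(\pi\otimes\lambda^{-1})\ge c(\pi)>2r$, all intervals above are empty, and $\pi^{K(r)}=0$. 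If instead $\pi^{K(r)}\ne 0$, pick $\lambda_0\in L_{\le r}$ with $c(\pi\otimes\lambda_0^{-1})\le 2r$; the displayed inequality with $\chi=\lambda_0^{-1}$ gives $c(\pi)\le 2r$, whence $c(\pi\otimes\lambda^{-1})\le\max\{c(\pi),2c(\lambda)\}\le 2r$ for every $\lambda\in L_{\le r}$, so the truncation is vacuous and
\[
\dim\pi^{K(r)}=\sum_{i=0}^{r}\sum_{\lambda\in L_i}\bigl(2r-c(\pi\otimes\lambda^{-1})+1\bigr).
\]

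Finally, the equality with the second displayed sum follows because $\lambda\mapsto\lambda^{-1}$ is an involution of each quotient $X_i/\sim$ (it preserves conductors and commutes with unramified twisting), and because $c(\pi\otimes\lambda)$ depends only on the class of $\lambda$ in $X/\sim$, an unramified twist not changing the conductor of the representation; reindexing each inner sum by $\lambda\mapsto\lambda^{-1}$ therefore turns $c(\pi\otimes\lambda^{-1})$ into $c(\pi\otimes\lambda)$ without altering the value.

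The only input beyond bookkeeping with Lemma~\ref{generators} is the conductor-stability inequality $c(\sigma\otimes\eta)\le\max\{c(\sigma),2c(\eta)\}$ for supercuspidal $\sigma$; that is the step I would treat most carefully, since it genuinely uses the supercuspidality of $\pi$ (it can fail for reducible or non-supercuspidal principal series) and rests on the delicate behaviour of conductors under twisting at $2c(\eta)=c(\sigma)$. Everything else is elementary.
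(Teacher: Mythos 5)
Your proof is correct, and for the dimension formula it follows the same route as the paper: fix $\psi$ with $c(\psi)=0$ and count the basis $B_r$ of Lemma~\ref{generators}. Where you genuinely diverge is the vanishing claim. The paper obtains ``$c(\pi)>2r\Rightarrow\pi^{K(r)}=0$'' as an immediate consequence of Theorem~\ref{main1} with $n=2$, $m=r$ (i.e.\ via the depth--conductor relation of \cite{LR}), whereas you rederive it inside the Kirillov model by showing $B_r=\emptyset$, using the stability inequality $c(\sigma\otimes\eta)\le\max\{c(\sigma),2c(\eta)\}$ for supercuspidal $\sigma$ (which indeed follows from \cite{Tunnell}, Proposition 3.5 applied to a minimal twist, essentially the identity (\ref{eq:Tunnell}) the paper records later). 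The paper's route is a one-liner given the earlier machinery; yours is self-contained within the Whittaker/Kirillov framework of Section~4 and has the additional merit of making explicit a point the paper passes over in silence: that when $\pi^{K(r)}\neq0$ one has $c(\pi\otimes\lambda^{-1})\le 2r$ for \emph{every} $\lambda\in L_{\le r}$, so that no term $2r-c(\pi\otimes\lambda^{-1})+1$ in the displayed sum is negative and the truncation $\max\{\cdot,0\}$ may be dropped. Your reindexing argument $\lambda\mapsto\lambda^{-1}$ for the equality of the two sums is also correct and fills in a step the paper leaves to the reader.
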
 
\begin{proof}
The former claim follows from Theorem \ref{main1} for $n=2$ and $m=r$. 
The latter formula is given by the counting all elements in the set $B_r$ in Lemma 
\ref{generators} for an additive character $\psi$ of $F$ with $c(\psi)=0$.
\end{proof}

Let $\pi$ is an irreducible supercuspidal representation of ${\rm GL}_2(F)$. 
By Theorem~\ref{main1},
we have $\dim \pi^{K(m)} = 0$ unless $c(\pi) \leq 2m$.
It follows from \cite{LR} Proposition 2.2
that
$c(\pi) = 2\rho(\pi)+2 \geq 2$.
We say that
$\pi$ is minimal if 
\[
c(\pi) \leq c(\pi \otimes \chi) 
\]
for any quasi-character $\chi$ of $F^\times$.
Suppose that $\pi$ is minimal.
Then it follows from \cite{Tunnell} Proposition 3.5 that
\begin{align}\label{eq:Tunnell}
c(\pi \otimes \chi) 
= \left\{
\begin{array}{cl}
c(\pi) & \mbox{if}\  2c(\chi)\leq c(\pi),\\
2c(\chi) & \mbox{if}\  c(\pi) < 2c(\chi).
\end{array}
\right.
\end{align}

\begin{prop}\label{prop:43}
Let $\pi$ be an irreducible minimal supercuspidal representation of 
${\rm GL}_2(F)$. 
Set 
$r  = \left\lfloor \frac{c(\pi)}{2}\right\rfloor$.
Then, for any integer $m$ which satisfies $c(\pi) \leq 2m$,
we have
$${\rm dim}\hspace{0.5mm}\pi^{K(m)}=
 (2m-c(\pi)+1) (q-1)q^{r-1}
+ \sum_{i=r+1}^{m}(2(m-i)+1)(q-1)^2q^{i-2}.
$$
\end{prop}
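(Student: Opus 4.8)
The plan is to apply Lemma~\ref{sai}, which reduces the computation of $\dim \pi^{K(m)}$ to the sum
\[
\dim \pi^{K(m)}=\sum_{i=0}^m\sum_{\la\in L_i}\bigl(2m-c(\pi\otimes\la^{-1})+1\bigr),
\]
and then to evaluate this sum using the minimality of $\pi$ together with Tunnell's formula~(\ref{eq:Tunnell}). The key observation is that $c(\pi\otimes\la^{-1})$ depends only on $c(\la)=c(\la^{-1})=i$, so for a fixed $i$ every character $\la\in L_i$ contributes the same value. Concretely, by~(\ref{eq:Tunnell}), if $2i\le c(\pi)$ then $c(\pi\otimes\la^{-1})=c(\pi)$, and if $2i>c(\pi)$ then $c(\pi\otimes\la^{-1})=2i$. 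Since $r=\lfloor c(\pi)/2\rfloor$, the condition $2i\le c(\pi)$ is exactly $i\le r$, so the sum naturally splits at $i=r$.

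First I would handle the range $0\le i\le r$. On this range each summand equals $2m-c(\pi)+1$, so the contribution is $(2m-c(\pi)+1)\sum_{i=0}^r|L_i|$. Using the cardinalities recorded before Lemma~\ref{generators}, namely $|L_0|=1$, $|L_1|=q-2$, and $|L_i|=(q-1)^2q^{i-2}$ for $i\ge 2$, a telescoping computation gives $\sum_{i=0}^r|L_i|=(q-1)q^{r-1}$ (valid for $r\ge 1$; one should check the small cases $r=0,1$ separately, noting that $c(\pi)\ge 2$ forces $r\ge 1$). This yields the first term $(2m-c(\pi)+1)(q-1)q^{r-1}$.

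Next I would handle the range $r+1\le i\le m$. Here $c(\pi\otimes\la^{-1})=2i$, so each summand equals $2m-2i+1=2(m-i)+1$, and since $i\ge r+1\ge 2$ we have $|L_i|=(q-1)^2q^{i-2}$. Hence this part contributes exactly $\sum_{i=r+1}^m (2(m-i)+1)(q-1)^2q^{i-2}$, which is the second term in the claimed formula. Adding the two contributions gives the stated expression, and one should remark that when $c(\pi)\le 2m$ holds we indeed have $\pi^{K(m)}\ne 0$ so Lemma~\ref{sai} applies.

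The main obstacle, though it is more bookkeeping than real difficulty, is verifying that the split at $i=r$ is correct including the boundary case $2r=c(\pi)$ (i.e.\ $c(\pi)$ even), where $c(\pi\otimes\la^{-1})=c(\pi)=2r$ for $\la\in L_r$ — consistent with both pieces of~(\ref{eq:Tunnell}) — and confirming the telescoping identity $\sum_{i=0}^r|L_i|=(q-1)q^{r-1}$ in the edge cases. I would also double-check that the two forms of the sum in Lemma~\ref{sai} (with $c(\pi\otimes\la)$ versus $c(\pi\otimes\la^{-1})$) give the same answer, which is automatic here since $c(\la)=c(\la^{-1})$ and the conductor of a twist depends only on the conductor of the twisting character in the minimal case.
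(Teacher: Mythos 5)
Your proposal is correct and follows essentially the same route as the paper's own proof: apply Lemma~\ref{sai}, split the sum at $i=r$ using (\ref{eq:Tunnell}), and evaluate $\sum_{i=0}^{r}|L_i|=(q-1)q^{r-1}$ via the recorded cardinalities. Your extra care about the boundary case $2r=c(\pi)$ and the fact that $c(\pi)\ge 2$ forces $r\ge 1$ is a welcome addition that the paper leaves implicit.
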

\begin{proof}
Note that $2r \leq c(\pi) < 2r+2$.
By Lemma~\ref{sai} and (\ref{eq:Tunnell}),
we get
\begin{align*}
{\rm dim}\hspace{0.5mm}\pi^{K(m)}& =
\sum_{i=0}^m\sum_{\la\in L_i}(2m-c(\pi\otimes\la)+1)\\
& = \sum_{i=0}^{r} \sum_{\la\in L_i} (2m-c(\pi)+1)
+ \sum_{i=r+1}^{m} \sum_{\la\in L_i} (2m-2c(\lambda)+1)\\
& =  (2m-c(\pi)+1)(q-1)q^{r-1}
+ \sum_{i=r+1}^{m}(2(m-i)+1)(q-1)^2q^{i-2},
\end{align*}
as required.
\end{proof}

\begin{cor}
Let $\pi$ be an irreducible minimal supercuspidal representation of 
${\rm GL}_2(F)$. 
Set 
$r = \left\lfloor \frac{c(\pi)}{2}\right\rfloor$.
\begin{enumerate}
\item[(i)] If $c(\pi)$ is even, then 
$\dim \pi^{K(r)} 
=  (q-1)q^{r-1}$.
\item[(ii)] If $c(\pi)$ is odd, then 
$\dim \pi^{K(r+1)} 
=   (q+1)(q-1)q^{r-1}$.
\end{enumerate}
\end{cor}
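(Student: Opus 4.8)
The plan is to derive both formulas as direct specializations of Proposition~\ref{prop:43} by plugging in the appropriate value of $m$. For part (i), where $c(\pi)$ is even, we have $c(\pi)=2r$ exactly, and we take $m=r$; then $2m=2r=c(\pi)$, so the hypothesis $c(\pi)\le 2m$ is satisfied with equality. For part (ii), where $c(\pi)$ is odd, we have $c(\pi)=2r+1$, and we take $m=r+1$; then $2m=2r+2>c(\pi)=2r+1$, so again $c(\pi)\le 2m$ holds. In each case the value of $\dim\pi^{K(m)}$ is then read off from the displayed formula in Proposition~\ref{prop:43}.

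For part (i), substituting $m=r$ and $c(\pi)=2r$ into
\[
\dim\pi^{K(m)} = (2m-c(\pi)+1)(q-1)q^{r-1} + \sum_{i=r+1}^{m}(2(m-i)+1)(q-1)^2 q^{i-2}
\]
gives first factor $(2r-2r+1)(q-1)q^{r-1}=(q-1)q^{r-1}$, while the sum $\sum_{i=r+1}^{r}$ is empty and hence zero. This immediately yields $\dim\pi^{K(r)}=(q-1)q^{r-1}$. For part (ii), substituting $m=r+1$ and $c(\pi)=2r+1$, the leading term is $(2(r+1)-(2r+1)+1)(q-1)q^{r-1}=2(q-1)q^{r-1}$, and the sum runs over the single index $i=r+1$, contributing $(2((r+1)-(r+1))+1)(q-1)^2 q^{(r+1)-2}=(q-1)^2 q^{r-1}$. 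Adding these gives $2(q-1)q^{r-1}+(q-1)^2 q^{r-1}=(q-1)q^{r-1}(2+(q-1))=(q+1)(q-1)q^{r-1}$, as claimed.

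There is essentially no obstacle here: the statement is a corollary precisely because everything reduces to an arithmetic simplification of an already-proven formula, the only mild points of care being the verification that the hypothesis $c(\pi)\le 2m$ is met for the chosen $m$, and the observation that in part (i) the summation range collapses to the empty sum. One should also note that $r=\lfloor c(\pi)/2\rfloor$ is consistent with $c(\pi)=2r$ in the even case and $c(\pi)=2r+1$ in the odd case, and that $r\ge 1$ since $c(\pi)\ge 2$ for supercuspidal $\pi$ (as recorded via \cite{LR} Proposition 2.2), so the exponents $q^{r-1}$ and $q^{i-2}$ appearing in Proposition~\ref{prop:43} are legitimate. Thus the proof is a two-line computation invoking Proposition~\ref{prop:43} twice.
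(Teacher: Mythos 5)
Your proof is correct and matches the paper's intent exactly: the corollary is stated without proof precisely because it is the direct specialization of Proposition~\ref{prop:43} at $m=r$ (even case) and $m=r+1$ (odd case), and your arithmetic in both cases checks out.
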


In general,
for an irreducible supercuspidal representation $\pi$ of ${\rm GL}_2(F)$,
there exist an irreducible minimal supercuspidal
representation $\tau$ of ${\rm GL}_2(F)$ and 
a quasi-character $\chi$ of $F^\times$
such that 
$\pi = \tau \otimes \chi$. 
In fact, being supercuspidal is preserved under twisting by characters by the formula (9.5.2) of \cite{BH}. The existence of the minimal conductor among twists of $\pi$ guarantees. 
Thus, we can take a quasi character $\chi$
so that $c(\pi \otimes \chi^{-1}) = \min\{c(\pi\otimes \chi')\, |\, 
\chi': \mbox{a quas-character of }\ F^\times\}$.
Then $\tau = \pi \otimes \chi^{-1}$ is an irreducible minimal supercuspidal
representation which satisfies $\pi = \tau \otimes \chi$.

\begin{prop}\label{prop:45}
Let $\tau$ be an irreducible minimal supercuspidal representation
of ${\rm GL}_2(F)$
and $\chi$ a quasi-character of $F^\times$.
Set $\pi = \tau \otimes \chi$.
For any integer $m$ such that $c(\pi) \leq 2m$,
we have
\[
\dim \pi^{K(m)} = \dim \tau^{K(m)}.
\]
\end{prop}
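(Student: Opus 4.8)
The plan is to compare the dimension formula for $\dim\pi^{K(m)}$ coming from Lemma~\ref{sai} with that for $\dim\tau^{K(m)}$, and to show the two sums agree term-by-term after a reindexing of the characters $\lambda$. By Lemma~\ref{sai}, since $c(\pi)\le 2m$ (and likewise $c(\tau)\le c(\pi)\le 2m$ because $\tau$ is minimal), we have
\[
\dim\pi^{K(m)}=\sum_{i=0}^{m}\sum_{\lambda\in L_i}\bigl(2m-c(\pi\otimes\lambda)+1\bigr),\qquad
\dim\tau^{K(m)}=\sum_{i=0}^{m}\sum_{\mu\in L_i}\bigl(2m-c(\tau\otimes\mu)+1\bigr).
\]
Writing $\pi=\tau\otimes\chi$, we have $c(\pi\otimes\lambda)=c(\tau\otimes(\chi\lambda))$, so the first sum is really a sum over the characters $\chi\lambda$ as $\lambda$ runs over $\bigsqcup_{i=0}^{m}L_i$. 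The key observation is that the assignment $\lambda\mapsto\chi\lambda$ induces a bijection on $X_{\le m}/\sim$ provided $c(\chi)\le m$: indeed twisting by a fixed $\chi$ preserves the property of having conductor $\le m$ exactly when $c(\chi)\le m$, and it is visibly invertible (inverse given by twisting by $\chi^{-1}$). Moreover this bijection respects the equivalence relation $\sim$, since $\chi\lambda_1\sim\chi\lambda_2$ iff $\lambda_1\sim\lambda_2$. Hence the multiset $\{\,c(\pi\otimes\lambda):\lambda\in\bigsqcup_{i\le m}L_i\,\}$ equals the multiset $\{\,c(\tau\otimes\mu):\mu\in\bigsqcup_{i\le m}L_i\,\}$, and the two dimension formulas coincide.

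The step I expect to be the main obstacle is verifying that $c(\chi)\le m$ actually holds under the hypothesis $c(\pi)\le 2m$; this is where minimality of $\tau$ enters crucially. Since $\tau$ is minimal, $c(\tau)\le c(\pi\otimes\chi^{-1}\cdot\chi)=c(\pi)$ is automatic, but more to the point the Tunnell formula \eqref{eq:Tunnell} applied to $\tau$ gives $c(\pi)=c(\tau\otimes\chi)=\max\{c(\tau),2c(\chi)\}$ when $c(\chi)$ is large, and in particular $2c(\chi)\le c(\pi)$ always (whether or not $2c(\chi)\le c(\tau)$: if $2c(\chi)>c(\tau)$ then $c(\pi)=2c(\chi)$, and if $2c(\chi)\le c(\tau)$ then $2c(\chi)\le c(\tau)\le c(\pi)$). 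Combining with $c(\pi)\le 2m$ yields $c(\chi)\le m$, as needed. I would spell this out as a short preliminary lemma or inline remark before invoking the bijection.

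With $c(\chi)\le m$ established, the remainder is bookkeeping: first argue the bijection $\lambda\mapsto\chi\lambda$ on $X_{\le m}/\sim$ as above, noting it need not preserve the individual sets $X_i/\sim$ (a character of conductor $i$ may be sent to one of different conductor), which is precisely why one sums over all $i\le m$ at once rather than matching level by level; then observe $2m-c(\pi\otimes\lambda)+1=2m-c(\tau\otimes\chi\lambda)+1$ term-by-term; finally conclude equality of the two sums, hence $\dim\pi^{K(m)}=\dim\tau^{K(m)}$. One small point to handle carefully is that the functions $\xi^{(\cdot)}_\lambda$ in the Kirillov-model description (Lemma~\ref{generators}) depend only on the class of $\lambda$ modulo unramified twists, so the counting in Lemma~\ref{sai} is genuinely a sum over $X_{\le m}/\sim$; this is already implicit in the statement of Lemma~\ref{sai} and needs no further justification here.
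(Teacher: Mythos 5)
Your argument is correct, but it takes a genuinely different and more roundabout route than the paper. Both proofs begin identically: applying Tunnell's formula \eqref{eq:Tunnell} to the minimal representation $\tau$ gives $c(\pi)=\max\{c(\tau),2c(\chi)\}$, hence $2c(\chi)\le c(\pi)\le 2m$ and so $c(\chi)\le m$. At that point the paper stops computing: since $\det K(m)\subset 1+\varpi^m\mathcal{O}$ and $c(\chi)\le m$, the character $\chi\circ\det$ is trivial on $K(m)$, so $\pi=\tau\otimes\chi$ and $\tau$ have the \emph{same} restriction to $K(m)$ and therefore literally the same space of $K(m)$-fixed vectors. You instead expand both sides by the counting formula of Lemma~\ref{sai} and match the two sums via the bijection $[\lambda]\mapsto[\chi\lambda]$ of $X_{\le m}/\sim$. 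This does work: the bijection is well defined precisely because $c(\chi)\le m$, conductors are invariant under unramified twist so the sum over representatives $L_{\le m}$ is unaffected by the reindexing, and minimality gives $c(\tau)\le c(\pi)\le 2m$ so that Lemma~\ref{sai} applies to $\tau$ as well. What you lose is economy and scope: your route invokes the full Kirillov-model computation where a one-line observation about $\chi\circ\det$ suffices, it only yields an equality of dimensions rather than an identity of subspaces, and it is tied to the supercuspidal $\GL_2$ setting, whereas the paper's argument would apply verbatim to any smooth representation once $c(\chi)\le m$ is known. What you gain is independence from that observation: the identity drops out mechanically from the explicit formulas already established.
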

\begin{proof}
By (\ref{eq:Tunnell}),
we see that $2c(\chi) \leq c(\pi)$.
So we have $c(\chi) \leq m$.
Since the character $\chi \circ \det$ of  ${\rm GL}_2(F)$
is trivial on $K(m)$,
the functor $\tau \mapsto \tau \otimes \chi$
preserves the $K(m)$-fixed vectors.
This implies the assertion.
\end{proof}

By Propositions~\ref{prop:43} and \ref{prop:45},
we obtain the following
\begin{cor}
Let $\pi$ be an irreducible supercuspidal representation
of ${\rm GL}_2(F)$.
Set
\[
s = \min\{c(\pi\otimes \chi)\, |\, \chi:
\mbox{a quasi-character of $F^\times$}\}\ and\ 
r = \left\lfloor \frac{s}{2}\right\rfloor.
\]
Then for any integer $m$ such that $c(\pi) \leq 2m$,
we have
\[
\dim \pi^{K(m)} = 
(2m-s+1) (q-1)q^{r-1}
+ \sum_{i=r+1}^{m}(2(m-i)+1)(q-1)^2q^{i-2}.
\]
\end{cor}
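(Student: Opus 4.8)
The plan is to reduce the general supercuspidal case to the minimal case already treated in Proposition~\ref{prop:43}, using the decomposition $\pi = \tau \otimes \chi$ with $\tau$ minimal and $c(\tau) = s$, which is recalled in the paragraph immediately preceding Proposition~\ref{prop:45}. First I would observe that by Lemma~\ref{sai} the hypothesis $c(\pi) \leq 2m$ guarantees $\pi^{K(m)} \neq 0$, so the claimed formula is the content of a genuine dimension count; moreover Proposition~\ref{prop:45} applies precisely because $c(\pi) \leq 2m$, yielding $\dim \pi^{K(m)} = \dim \tau^{K(m)}$.

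Next I would apply Proposition~\ref{prop:43} to the minimal representation $\tau$, which requires knowing $c(\tau) \leq 2m$. This follows from $c(\tau) = s \leq c(\pi) \leq 2m$, where the first inequality is the definition of $s$ as the minimum conductor over twists. With $r = \lfloor s/2 \rfloor = \lfloor c(\tau)/2\rfloor$, Proposition~\ref{prop:43} gives
\[
\dim \tau^{K(m)} = (2m - c(\tau) + 1)(q-1)q^{r-1} + \sum_{i=r+1}^{m}(2(m-i)+1)(q-1)^2 q^{i-2}.
\]
Substituting $c(\tau) = s$ and combining with the identity $\dim \pi^{K(m)} = \dim \tau^{K(m)}$ from Proposition~\ref{prop:45} yields exactly the asserted formula.

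Since every ingredient—Lemma~\ref{sai}, the existence of the minimal twist $\tau$, Proposition~\ref{prop:43}, and Proposition~\ref{prop:45}—is already in place, there is essentially no obstacle beyond bookkeeping: the only point requiring a moment's care is checking that the floor $r = \lfloor s/2 \rfloor$ used in the statement of the corollary is literally the same $r$ that appears in Proposition~\ref{prop:43} when applied to $\tau$, which holds because $s = c(\tau)$. One should also note that the range of summation is nonempty or interpreted as zero when $m = r$, consistent with case (i) of the corollary following Proposition~\ref{prop:43}. Thus the proof is a two-line citation of the two propositions, and I would write it as such.
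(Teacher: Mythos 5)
Your proposal is correct and is exactly the paper's argument: the paper derives this corollary by writing $\pi=\tau\otimes\chi$ with $\tau$ minimal, invoking Proposition~\ref{prop:45} to get $\dim\pi^{K(m)}=\dim\tau^{K(m)}$, and then applying Proposition~\ref{prop:43} to $\tau$ with $c(\tau)=s$. The only trivial quibble is that the implication ``$c(\pi)\le 2m$ implies $\pi^{K(m)}\neq 0$'' is really Theorem~\ref{main1} (Lemma~\ref{sai} states the contrapositive direction), but this side remark is not needed for the dimension formula.
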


\end{document}